\documentclass[a4paper,11pt,twoside,final]{article}
\usepackage{amsmath,amssymb,verbatim,cleveref,enumerate,cite}
\usepackage{mathptmx}
\usepackage{graphicx}
\usepackage{xcolor}
\usepackage{url,hyperref}
\usepackage{mathptmx}
\usepackage{graphicx}

\newtheorem{theorem}{Theorem}[section]

\newtheorem{corollary}[theorem]{Corollary}

\newtheorem{definition}[theorem]{Definition}
\newtheorem{example}[theorem]{Example}

\newtheorem{lemma}[theorem]{Lemma}

\newtheorem{proposition}[theorem]{Proposition}
\newtheorem{remark}[theorem]{Remark}

\newenvironment{proof}[1][Proof]{\textbf{#1.} }{\ \rule{0.5em}{0.5em}}

\def\a{\alpha}

\def\vp{\varepsilon}
\def\As{\mathcal{A}}
\def\Ps{\mathcal{P}}
\def\G{\Gamma}
\def\dd{\displaystyle}
\def\erre{\mathbb{R}}


\newlength{\wideitemsep}
\setlength{\wideitemsep}{.5\itemsep}
\addtolength{\wideitemsep}{-7pt}
\let\olditem\item
\renewcommand{\item}{\setlength{\itemsep}{\wideitemsep}\olditem}
\begin{document}
\title{\bf \Large  A multivalued version of the
Radon-Nikod\'{y}m theorem, via the single-valued Gould integral}
\author{Domenico Candeloro \and 
       Anca Croitoru \and
        Alina Gavrilu\c{t} \and
        Anna Rita Sambucini}
\newcommand{\Addresses}{{
  \bigskip

 D.~Candeloro, \textsc{Department of Mathematics and Computer Sciences, University of Perugia,
 1, Via Vanvitelli -- 06123, Perugia (Italy)} 
  \textit{E-mail address}
: \texttt{domenico.candeloro@unipg.it}
https://orcid.org/0000-0003-0526-5334
  \medskip

  A.~Croitoru, \textsc{Faculty of Mathematics, ”Al. I. Cuza” University, 700506 Ia\c{s}i, Romania} 
  \textit{E-mail address}
: \texttt{croitoru@uaic.ro}
 
\medskip

  A.~Gavrilu\c{t}, \textsc{Faculty of Mathematics, ”Al. I. Cuza” University, 700506 Ia\c{s}i, Romania} 
  \textit{E-mail address}
: \texttt{gavrilut@uaic.ro}

\medskip

A.R. ~Sambucini, \textsc{Department of Mathematics and Computer Sciences, University of Perugia,
 1, Via Vanvitelli -- 06123, Perugia (Italy)} 
  \textit{E-mail address}
: \texttt{anna.sambucini@unipg.it}  https://orcid.org/0000-0003-0161-8729
}}
\maketitle

\maketitle \pagestyle{myheadings} \markboth{\centerline{\small \rm   Candeloro, Croitoru, Gavrilu\c{t}, Sambucini}}{\centerline{\small \rm
A multivalued version of the Radon-Nikod\'{y}m theorem...}}
\begin{abstract}
In this paper we  consider a Gould-type integral of 
real functions with respect to a compact and convex valued non necessarily additive measure. 
In particular we will introduce
the concept of integrable multimeasure and, thanks to this notion,  we will establish
 an exact  Radon-Nikod\'{y}m theorem relative to a fuzzy multisubmeasure which is new also in the finite dimensional case. 
Some results concerning the Gould integral are also obtained.
\end{abstract}
\medskip
{\bf MSC:} 28B20, 28C15, 49J53.
\bigskip

\textit{Keywords and phrases:} set valued Radon-Nikod\'{y}m theorem, non additive mesure,
Gould integrability.

\section{Introduction}\label{intro}
Non additive measures are an important field of research in measure theory. 
Due to its applications
 in economics, statistics, human decision making and medicine,
 the field of non-additive measures and of fuzzy measures has been  intensively studied 
in the last years, while  
 the theory of  monotonicity is used in statistics, game theory, probability and artificial intelligence.
(See for example \cite{cope,klmp,bpap}). 
In  \cite{pap2016} 
 Pap has recently studied 
 multivalued integration, examining in particular the Gould integrability for multifunctions and multisubmeasures.
The present research could be connected to his  paper  as a continuation.\\
Concerning with  the theory of integration,
the existence  of a Radon-Nikod\'{y}m derivative is an important tool.
 In fact it provides conditions for the existence of a certain integral representation of measures.
 The Radon-Nikod\'{y}m theorem is used, for example,
for converting  actual probabilities 
into those of the risk neutral probabilities.
Moreover it  was  approached by many authors in several different settings,  
(e.g. \cite{ms-trieste,LM,k2013,k2015,mar1}).
In particular, 
in \cite{CV02} an outline of the previous results is presented, together with quotations 
of the papers in this topic which have appeared since the late 60's.
 Similar problems were studied  afterwards, e.g. in \cite{ms94b,s2} as an extension of \cite{ref11,ref14}, 
later in  \cite{bdpm1,bdpm},
 and also recently deeply examined in  \cite{ckr2013,dpp2015}
both in the countably and the finitely additive case using  different notions of integrals. 
Here we will undertake a similar investigation and we will consider 
fuzzy multisubmeasures defined on an algebra and taking convex compact values  in an arbitrary Banach space $X$.\\
In this paper essentially 
a Radon-Nikod\'{y}m theorem is established,  in order to represent a set-valued additive  measure as the
Gould-type integral of a suitable real-valued function with respect to a fixed fuzzy multisubmeasure.
We point out that  this result is new also in the finite-dimensional case since additivity is requested only for one of the set valued measures involved.  \\
The paper is organized as follows: in Section \ref{basic}
some basic notions and results are given, while in 
 section \ref{tot+G}  some results and examples
regarding Gould type integrability relative to a non-necessarily additive  
measure $m$  are obtained (see also \cite{ccgs2015a,pap2016}). The target space for   $m$
is  the Banach lattice of all real-valued continuous functions defined in a compact, Stonian space 
$\Omega$ (the space $C(\Omega)$). 
This is due to the fact that  Banach lattices  are  often good models for applications, and also for studying set-valued measures
 or functions.
In effect, thanks to the R{\aa}dstr\"{o}m embedding Theorem,  many important hyperspaces 
can be embedded in   $C(\Omega)$ (for example the family of convex and compact subset of a 
Banach space $X$, while for an exhaustive list of such hyperspaces see e.g. \cite{L1} and, 
for various applications in Banach lattices, \cite{ac,s1990a,s1994,cdpms2017}).
In  section  \ref{rnsec} a Radon-Nikod\'{y}m type theorem 
will be obtained (Theorem \ref{trn})  using the set-valued integral defined in \cite{pgc2010,pap2016}. 
According to this result, a multimeasure $\G$ can be expressed as
a Gould type
set-valued integral of a function $f$ with respect to a fuzzy multisubmeasure $M$, that is: 
$\G(E) = \int_{E}fdM,$ for every
$E\in \mathcal{A}$, under  a suitable  exhaustion  condition and the strong absolute continuity
 of $\G$ with respect to $M.$ In this case, 
the construction of the Radon-Nikod\'{y}m derivative makes use of  the mentioned notion of exhaustion,
introduced by Maynard  \cite{ref14} in the scalar case and extended by other authors to the vector valued case: 
\cite{ref11,ms94b,s2}.
As an application of the Theorem \ref{trn} an integration by substitution theorem is obtained for fuzzy multimeasures.
\section{Basic facts and definitions.}\label{basic}
Unless stated otherwise, throughout this paper $T$ is an abstract nonvoid set, $\mathcal{P}(T)$
 the family of all subsets of $T$,
$\mathcal{A}$ an algebra of subsets of $T$ and $\mu:\mathcal{A}\rightarrow [0,+\infty)$ 
an arbitrary set function,
with $\mu(\emptyset)=0$.
 A \textit{partition} of $T$ is a finite family of nonvoid sets 
$P=\{A_{i}\}_{i=1}^{n}\subset \mathcal{A}$ such that 
$A_{i}\cap A_{j}=\emptyset ,i\neq j$, and $\bigcup_{i=1}^{n}A_{i}=T.$
  Let $P=\{A_{i}\}_{i=1}^{n}$ and $P^{\prime }=\{B_{j}\}_{j=1}^{q}$ be two partitions of $T$.
The partition $P^{\prime }$ is said to be \textit{finer than} $P$, denoted by $P\leq
P^{\prime }$ (or, $P^{\prime}\geq P$), if for every $j\in \{1,\ldots,q\}$, there exists $i_{j}\in\{1,\ldots,n\}$ so that 
$B_{j}\subseteq A_{i_{j}}$.
 The \textit{common refinement} of two partitions 
$P=\{A_{i}\}_{i=1}^{n}$ and $P^{\prime }=\{B_{j}\}_{j=1}^{q}$ is the partition 
$P\vee P^{\prime }=\{A_{i}\cap B_{j}\}_{ i \in\{1,\ldots,n\},j \in\{1,\ldots,q\}}$.
Obviously, 
$P\vee P^{\prime }\geq P$ and $P\vee P^{\prime }\geq
P^{\prime }.$
The class of all partitions of $T$ will be denoted by $\mathcal{P}$, and if
 $A\in  \mathcal{A}$ is fixed, $\mathcal{P}_{A}$ denotes the class of all partitions of $A$.
Given $\mu$, we will consider $\overline{\mu}, \mu^*:\mathcal{P}(T)\to [0,+\infty],$  \textit{the variation, semivariation} of $\mu$ respectively and $\widetilde{\mu} :\mathcal{P}(T)\to [0,+\infty]$, 
given by
$\widetilde{\mu}(E)=\inf \{\overline{\mu}(A);E\subseteq A, A\in \mathcal{A}\}$. For the properties of variation, semivariation and $\widetilde{\mu}$ see for example  \cite{gcmed}.
\\
Let  $(X,\|\cdot\|)$ be a Banach space, $B_X$ its unit ball;
the symbol $m$ will be used for vector valued set functions.
For a vector measure $m:\mathcal{A}\to X$, its semivariation $m^{*}: \mathcal{P}(T)\to [0,+\infty]$ is defined   by:
$m^{*}(E) = \sup \{\|m(A)\|; A\in \mathcal{A}, A\subseteq E\}.$
In an analogous way we can define 
the {\em variation}  $\overline{m}:= \overline{\|m \|}$. 
Thus, if $A\in \mathcal{A}$, then $\|m(A)\|\leq \overline{m}(A)$, which implies that $m^{*}(E)\leq \overline{m}(E)$,
 for every $E\in \mathcal{P}(T)$.\\
Let 
$ck(X)$ be  the family of all
nonempty compact convex subsets of a real Banach space $(X,\|\cdot\|)$.
By the symbol $+$  the  Minkowski addition  will be indicated.
Let $h$ be the Hausdorff metric on $ck(X)$.
 It is well-known that
 $(ck(X),h)$ is a complete
metric space (see for example \cite[Theorem II-14]{cv}). 
Finally, for any bounded set $A$,   
 $|A|_h$ denotes the distance  $h(A,\{0\})$, where $0$ is the origin
of $X$.
With the symbol $M$ we denote  a set function with values in $ck(X)$.
Now, several notions   are recalled for further use.


\begin{definition}\label{ex-null} 
\rm (\cite{ref11}, \cite[Definition 3.2]{ms-trieste})
Let $\mu:\mathcal{A}\to \overline{\mathbb{R}}_{+}$ be finitely additive.
\begin{description}
\item[\rm \bf \ref{ex-null}.a)]  A finite or countable family of pairwise disjoint sets
$(E_{i})_{i\in I}\subset \mathcal{A}$ will be called a $\mu$-\textit{ exhaustion} of 
$E \in \mathcal{A} $\, 
if $\mu(E_{i})>0$ for every $i\in I$ and for each $\varepsilon>0$, there is 
$n_{0}(\varepsilon) = n_{0}\in \mathbb{N}$ such
that $\mu(E \backslash \bigcup\limits_{i=1}^{n_{0}}E_{i})<\varepsilon.$
\item[\rm \bf \ref{ex-null}.b)] A set property $P$ is said to be $\mu$-\textit{exhaustive} on
$E\in \mathcal{A}$ if there exists a $\mu$-exhaustion $(E_i)_i$ of
$E$, such that every $E_i$ has $P$.
\item[\rm \bf \ref{ex-null}.c)] A set property $P$ is called $\mu$-\textit{null difference} if
whenever $A,B\in \mathcal{A}$ with $\mu(A)>0$ and $\mu(B)>0$, from
$\mu(A \triangle B)=0,$ it follows that either $A$ and $B$ both
have $P$ or neither does.
\item[\rm \bf \ref{ex-null}.d)]  A property (P) about the points of $T$ \textit{ holds $\widetilde{\mu}$-almost everywhere}
 (denoted \textit{ $\widetilde{\mu}$-a.e.}) if there exists $A\in\mathcal{P}(T)$ so that 
$\widetilde{\mu}(A) = 0$ and (P) holds on $T\backslash A$.
\end{description}
\end{definition}

\noindent For an arbitrary {real} function \mbox{$f:T\to \mathbb{R},$} the symbol  
$\sigma _{m}(f,P)$ (or, if there is no doubt, $\sigma(f,P)$,     
 $\sigma _{m}(P)$ or $\sigma (P)$) denotes the sum 
$\sum_{i=1}^{n}f(t_{i})m(A_{i}),$ for every partition of $T$, $P=\{A_{i}\}_{i=1}^{n}$ 
and every $t_{i}\in A_{i},i\in \{1,\ldots,n\}$. With the same meaning we define $\sigma_M (f,P)$, for non negative
 $f$ and $ck(X)$-valued $M$.

\section{Gould integral.}\label{tot+G}
We now introduce  the definition 
of Gould integrability.
The Gould integral was defined in \cite{Gould} for real functions with respect to a finitely additive vector 
measure taking values in a Banach space. Different generalizations and topics were introduced and studied in \cite{gcmed,ref7,ref6,pgc2010,ccgs2015a,pap2016}.
\\
Moreover, since we want to study and consider mainly the multivalued case,
(i.e. set functions taking values in some space of bounded convex sets)
 we focus our attention on the Banach space  $(C(\Omega),\|\cdot\|_{\infty})$.
 This is due to the fact that, thanks to the R{\aa}dstr\"{o}m Embedding Theorem, many
 important hyperspaces can be embedded in   $C(\Omega)$ (for a list of such hyperspaces see e.g. \cite{L1}).
We remember moreover that $C(\Omega)$ is also a Banach lattice in which 
the symbol $| \cdot |$  denotes  the modulus. 
  So, rather than considering a general Banach space $(X, \|\cdot\|)$, or a Banach lattice, from now on we restrict ourselves to
mappings $m:\mathcal{A} \to C(\Omega)$,
with $\Omega$ compact, Hausdorff and we can give 
 the notion of subadditivity for $C(\Omega)$-valued set functions in the usual way:  $m(\emptyset) = 0$ and 
$m(A\cup B)\leq m(A)+m(B)$
holds, when $A,B\in \As$, $A\cap B=\emptyset$

\begin{definition}\label{g}\rm A real function $f:T\to \mathbb{R}$ is said to be
\begin{enumerate}[ \rm \bf \ref{g}.a)]
 \item   (\textit{Gould}) $m$\textit{-integrable}
\textit{on }$T$ if the net $(\sigma (P))_{P\in (\mathcal{P},\leq )}$ is
convergent in $C(\Omega)$, where $\mathcal{P}$ is ordered by the relation $"\leq"$. 
 If $(\sigma (P))_{P\in (\mathcal{P},\leq )}$ is convergent, then its limit
is called \textit{the Gould integral of }$f$\textit{\ on }$T$\textit{\ with
respect to }$m$, denoted by $(G) \int_{T}fdm$ (shortly $\int_{T}fdm$).
\item  $m$\textit{-integrable on }$B \in \mathcal{A}$\, if
the restriction $f|_{B}$ of $f$ to $B$ is $m$-integrable on $(B,\mathcal{A}_{B},m_{B})$.
\end{enumerate}
\end{definition}

\begin{remark}\rm\label{somma}
 Thus $f$ is $m$-integrable on $T$ if and only if there
exists $\a\in C(\Omega)$ such that for every $\varepsilon >0$, there exists
a partition $P_{\varepsilon }$ of $T$, so that for every other partition of $T$, $P=\{A_{i}\}_{i=1}^{n},$ with 
$P\geq P_{\varepsilon }$ and
every choice of points $t_{i}\in A_{i},i\in\{1,\ldots,n\}$,  one has $\|\sigma (P)-\a\|_{\infty}<\varepsilon $.
Moreover 
If $f_1,f_2$ are $m$-integrable and $\alpha$ is any real constant, then $\alpha f_1$ is $m$-integrable, $f_1+f_2$ is $m$-integrable, and the integral is 
linear.
\end{remark}

\begin{proposition}\label{gouldsotto}
Let 
$f:T\to \mathbb{R}$ be any Gould-integrable mapping with respect to $m$. Then, if $A$ is any fixed element of 
$\As$, the mapping $f \mathbf{1}_A$ is integrable too.
\end{proposition}
\begin{proof}
Denoting by  $\Pi_A$ the partitions of the set $A$, it is not difficult to prove 
 that the sums $\sigma(f,\Pi_A)$ satisfy a Cauchy principle in $C(\Omega)$;
 since this space is complete with respect to its norm, the assertion follows.
\end{proof} \\


\begin{example}\label{ex}\rm Some examples of  Gould integrable functions with respect to $m$ are given here:
\begin{enumerate}[\rm  \ref{ex}.a)]
\item Let $T$ be a finite set, $\mathcal{A}=\mathcal{P}(T)$, $m:\mathcal{A}\to C(\Omega)$ and
 $f:T\to \mathbb{R}$ be arbitrary.
 Then $f$ is Gould $m$-integrable and $\int_{T}fdm = \sum\limits_{t\in T}f(t)m(\{t\})$.
\item If $m:\mathcal{A}\to C(\Omega)$ is finitely additive and $f:T\to \mathbb{R}$ is simple, $f= 
\sum{i=1}^{n} a_{i}\cdot 1_{A_{i}}$, then $f$ is Gould $m$-integrable and $\int_{T}fdm = 
\sum_{i=1}^{n} a_{i}\cdot m (A_{i})$.
\end{enumerate}
\end{example}

Moreover
the previous example \ref{ex}.b) can be improved as follows.
\begin{proposition}\label{funzionidiscrete}
Let $\As$ be a $\sigma$-algebra, 
and $m:\As\to C(\Omega)$ be finitely additive, and  assume that $(A_n)_{n\in \mathbb{N}}$ is a countable family of pairwise disjoint
 elements of $\As$, such that $\lim_n \overline{m}(\cup_{j>n}A_j)=0$. Then, the function $f:T\to \mathbb{R}$ defined as
 $f=\sum_n c_n1_{A_n}$ is Gould-integrable as soon as the sequence $(c_n)_n$ is 
bounded in $\mathbb{R}$; in this case, 
$\int_T f dm =\sum_n c_n m(A_n)$.
\end{proposition}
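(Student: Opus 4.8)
The plan is to approximate $f$ by the simple functions $g_N := \sum_{n=1}^{N} c_n \mathbf{1}_{A_n}$ and to use a Cauchy-type estimate to show that the Gould integrals of these partial sums form a convergent sequence whose limit serves as the integral of $f$. First I would set $M := \sup_n |c_n|$ and observe that by Example \ref{ex}.b) each $g_N$ is Gould $m$-integrable with $\int_T g_N\, dm = \sum_{n=1}^N c_n m(A_n)$; moreover, since $\lim_n \overline{m}(\cup_{j>n}A_j)=0$, the series $\sum_n c_n m(A_n)$ converges in $C(\Omega)$ (its tails are bounded in norm by $M\,\overline{m}(\cup_{j>N}A_j)$, using that $\|\sum_{j=N+1}^{K}c_j m(A_j)\|_\infty \le M\,\overline{m}(\cup_{j>N}A_j)$ by additivity of $\overline m$ and the estimate $\|m(B)\|_\infty\le\overline m(B)$). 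Call this limit $J$.

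Next I would show directly that the net $(\sigma_m(f,P))_P$ converges to $J$. Fix $\varepsilon>0$ and pick $N$ so large that $\overline{m}(\cup_{j>N}A_j)<\varepsilon$ and also $\|J-\sum_{n=1}^N c_n m(A_n)\|_\infty<\varepsilon$. Since $g_N$ is Gould-integrable, there is a partition $P_\varepsilon$ of $T$ such that $\|\sigma_m(g_N,P)-\sum_{n=1}^N c_n m(A_n)\|_\infty<\varepsilon$ for every $P\ge P_\varepsilon$. Refining if necessary, I may assume $P_\varepsilon$ is finer than the (finite) partition $\{A_1,\dots,A_N, (\cup_{n=1}^N A_n)^c\}$, so that for any $P=\{B_i\}\ge P_\varepsilon$ each block $B_i$ is either contained in some $A_n$ with $n\le N$, or contained in $W:=(\cup_{n=1}^N A_n)^c$. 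The key estimate is then a comparison between $\sigma_m(f,P)$ and $\sigma_m(g_N,P)$: on blocks contained in $A_n$ with $n\le N$ the two sums agree (there $f=g_N=c_n$), while on blocks $B_i\subseteq W$ one has $\|f(t_i)m(B_i)\|_\infty \le M\|m(B_i)\|_\infty$. Grouping those blocks, $\|\sum_{B_i\subseteq W} f(t_i)m(B_i)\|_\infty \le M\,\overline{m}(W)$.

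The main obstacle is precisely controlling this last term: $W=(\cup_{n=1}^N A_n)^c$ need not be contained in $\cup_{j>N}A_j$ (there may be a "leftover'' set $T\setminus\cup_n A_n$ on which $f=0$), so I cannot bound $\overline{m}(W)$ by $\overline{m}(\cup_{j>N}A_j)$ directly. The fix is to note that $f$ vanishes on $T\setminus\cup_n A_n$, so for blocks $B_i\subseteq W$ the contribution $f(t_i)m(B_i)$ is nonzero only when $t_i$ lands in $\cup_{j>N}A_j$; hence I should instead bound $\|\sum_{B_i\subseteq W}f(t_i)m(B_i)\|_\infty$ by splitting each such $B_i$ as $(B_i\cap\cup_{j>N}A_j)\cup(B_i\setminus\cup_{j>N}A_j)$ — but since the tag $t_i$ is a single point, a cleaner route is to further refine $P_\varepsilon$ so that every block is contained in a single $A_j$ or in $T\setminus\cup_n A_n$; this is possible because $\As$ is a $\sigma$-algebra only up to finitely many steps, so instead I refine only by the finite partition into $A_1,\dots,A_N,\cup_{j>N}A_j, T\setminus\cup_n A_n$ being careful that $\cup_{j>N}A_j\in\As$. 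Then blocks inside $T\setminus\cup_n A_n$ contribute $0$, blocks inside $\cup_{j>N}A_j$ contribute at most $M\,\overline m(\cup_{j>N}A_j)<M\varepsilon$ in total, and the remaining blocks match $\sigma_m(g_N,P)$.

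Putting the pieces together via the triangle inequality,
\[
\|\sigma_m(f,P)-J\|_\infty \le \|\sigma_m(f,P)-\sigma_m(g_N,P)\|_\infty + \|\sigma_m(g_N,P)-\textstyle\sum_{n=1}^N c_n m(A_n)\|_\infty + \|\textstyle\sum_{n=1}^N c_n m(A_n)-J\|_\infty
\]
which is at most $M\varepsilon+\varepsilon+\varepsilon$ for every $P\ge P_\varepsilon$. Since $\varepsilon$ is arbitrary, the net $(\sigma_m(f,P))_P$ converges to $J$, proving that $f$ is Gould $m$-integrable with $\int_T f\,dm = J = \sum_n c_n m(A_n)$. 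The only genuinely delicate point, as noted, is the careful choice of the reference partition $P_\varepsilon$ so that the ``tail'' and ``leftover'' blocks are separated and estimated correctly; everything else is routine bookkeeping with the triangle inequality and the bound $\|m(B)\|_\infty\le\overline m(B)$.
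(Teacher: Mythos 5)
Your proof is correct and takes essentially the same route as the paper's: both pick $N$ with $\overline{m}(\cup_{j>N}A_j)<\varepsilon$, refine the partition so that the first $N$ sets, the tail $\cup_{j>N}A_j$, and the leftover $T\setminus\cup_n A_n$ are separated, and then bound the tail contribution by $M\,\overline{m}(\cup_{j>N}A_j)$ while the leftover contributes nothing. Your detour through the partial sums $g_N$ and Example \ref{ex}.b) is only a repackaging of the paper's direct computation, and your explicit refinement by $\{A_1,\dots,A_N,\cup_{j>N}A_j,T\setminus\cup_nA_n\}$ is in fact slightly more careful than the paper's refinement by $\{F,S\setminus F,T\setminus S\}$ alone.
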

\begin{proof}
Under these assumptions, it is clear that the real-valued series
$\sum_n |c_n| \overline{m}(A_n)$
is convergent, hence the series $\sum_n c_n m(A_n)$ is convergent in $C(\Omega)$.
We will show that $f$ is integrable and its integral coincides with $\sum_n c_n m(A_n)$.
Define now $S:=\bigcup_n A_n$, and fix $\varepsilon>0$. Then there exists $N\in \mathbb{N}$ such that 
$\overline{m}(\cup_{j>N}A_j)<\varepsilon$. Therefore 
$\sum_{j>N}|c_j|\overline{m}(A_j)\leq K\varepsilon$
where $K$ is any bound for $|c_n|$, \ $n\in \mathbb{N}$. 
 Now set
$F:=\bigcup_{j\leq N}A_j,$
and choose any partition $P$ of $T$, finer than $\{F,S\setminus F, T\setminus S\}$. Setting 
$P=\{(B_i,t_i),i=1..,k\}$, one 
 then  has
$\sigma(f,P) :=\sum_{i=1}^kf(t_i)m(B_i)=\sum_{i\in I_1}f(t_i)m(B_i)+\sum_{i\in I_2}f(t_i)m(B_i),$
where $I_1=\{i:B_i\subset F\}, \ I_2=\{i:B_i\subset S\setminus F\}$.\\
Of course
$\sum_{i\in I_1}f(t_i)m(B_i)
=\sum_{j=1}^Nc_j m(A_j)$
and
$\|
\sum_{i\in I_2}f(t_i)m(B_i)\|_{\infty}\leq K \overline{m}(\cup_{j>N}A_j))\leq K\varepsilon.$\\
So,
$
\|\sigma(f,P)  -\sum_n c_n m(A_n) \|_{\infty}\leq\| \sum_{i\in I_2} c_j m(A_j)\|_{\infty}+
\sum_{j>N}
|c_j| \overline{m}(A_j)\leq 2K \varepsilon.$
This concludes the proof.
\end{proof} \\

For more general functions, proceeding as in the proof of \cite[theorem 1.4]{mimmoroma}
 and \cite[Proposition 6]{SISY}, one can deduce the following proposition and the subsequent corollary.
In this situation the absolute value replaces the norm of $C(\Omega)$.
\begin{proposition}\label{henlemma} 
Let $f:T\to \mathbb{R}$ be any integrable function. Then there exists a sequence $(\Pi_n)_n$ of partitions
 such that, for every $n$ it is
$\sum_{E\in \Pi_n}Ob(f, E)\leq \dfrac{u}{n},$
where $Ob(f,E)=\sup_{\Pi'_E,\Pi''_E} \left\{\left| \sum_{F''\in \Pi''_E}f(t)m(F'') - 
\hskip-2mm\sum_{F'\in \Pi'_E}f(s)m(F')\right|,
\, \forall\,\, t \in F'', s \in F' \right\},$
 and $\Pi'_E, \Pi''_E$ run along all  partitions of $E$.

\end{proposition}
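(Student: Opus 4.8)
The plan is to adapt the classical Henstock-type lemma argument (as in \cite[Proposition 6]{SISY}) to the present Gould-integrability setting in $C(\Omega)$. Since $f$ is integrable on $T$, for each $n\in\mathbb{N}$ I would start from the Cauchy criterion: there is a partition $Q_n\in\Ps$ such that $\|\sigma(f,\Pi)-\sigma(f,\Pi')\|_{\infty}\leq \frac{1}{4n}$ (or some fixed small multiple of $\frac1n$) for all partitions $\Pi,\Pi'$ of $T$ that are finer than $Q_n$. By Proposition \ref{gouldsotto}, $f\mathbf{1}_E$ is integrable on every $E\in\As$, so the same Cauchy property holds locally on each block $E$ of $Q_n$, with a compatible threshold; this is what makes the quantity $Ob(f,E)$ finite for each $E\in Q_n$.

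The key step is to pass from the global Cauchy estimate to the \emph{sum} $\sum_{E\in Q_n}Ob(f,E)\leq \frac{u}{n}$. I would fix the partition $Q_n=\{E_1,\dots,E_k\}$, and for each $j$ choose near-extremal pairs of subpartitions $\Pi'_{E_j},\Pi''_{E_j}$ of $E_j$ witnessing $Ob(f,E_j)$ up to $\frac{1}{nk}$ in norm. Amalgamating all the $\Pi'_{E_j}$ into one partition $\Pi'$ of $T$ and all the $\Pi''_{E_j}$ into $\Pi''$ (both automatically finer than $Q_n$), the difference $\sigma(f,\Pi'')-\sigma(f,\Pi')$ splits as $\sum_j\big(\sum_{F''\in\Pi''_{E_j}}f(t)m(F'')-\sum_{F'\in\Pi'_{E_j}}f(s)m(F')\big)$. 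The subtlety specific to $C(\Omega)$ — and the main obstacle — is that $\|\cdot\|_{\infty}$ is not additive over the blocks, so one cannot directly write $\sum_j\|\cdot\|_{\infty}$ as $\|\sum_j\cdot\|_{\infty}$. I would handle this by exploiting the lattice/unit structure: for each $j$ pick a point $\omega_j\in\Omega$ at which the $j$-th block difference attains (nearly) its sup norm, and a sign $\varepsilon_j\in\{\pm1\}$ so that $\varepsilon_j(\text{$j$-th difference})(\omega_j)$ equals that norm; then replace, inside the $j$-th block, the subpartition and tags by whichever of $\Pi'_{E_j},\Pi''_{E_j}$ the sign dictates, obtaining two honest partitions $\widehat\Pi,\widehat\Pi'$ of $T$ finer than $Q_n$ whose global difference, evaluated appropriately, dominates $\sum_j Ob(f,E_j)$ up to the accumulated error $\frac{1}{n}$. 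Since $\|\sigma(f,\widehat\Pi)-\sigma(f,\widehat\Pi')\|_{\infty}\leq\frac{1}{4n}$, this yields $\sum_{E\in Q_n}Ob(f,E)\cdot u\leq \frac{u}{n}$ after absorbing constants; relabelling $Q_n$ as $P_n$ (and passing to a further refinement if the constant bookkeeping requires it) gives the claim.

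A cleaner alternative, which I would use if the sign-selection argument gets unwieldy, is to invoke the lattice inequality $\big\|\sum_{j}v_j\big\|_{\infty}\geq \big\|\sum_j |v_j|\big\|_{\infty}\cdot\tfrac{1}{2}$-type comparisons is false in general, so instead: note that $Ob(f,E_j)=\sup\{\sigma(f,\Pi''_{E_j})-\sigma(f,\Pi'_{E_j})\}$ where the sup of the \emph{vector} differences is taken in the Banach-lattice sense is dominated pointwise; evaluating at a single $\omega$ and summing over $j$ recovers a scalar Henstock lemma, then taking sup over $\omega\in\Omega$ at the end. Either way, the heart of the matter is the interchange of the supremum over subpartitions with the sum over the blocks of $P_n$, and controlling it through the completeness and the unit-norm structure of $C(\Omega)$ established in Section \ref{sg}. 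Routine verifications — that amalgamated subpartitions are genuine refinements of $Q_n$, that tags can be chosen freely, and that the telescoping/triangle estimates close — I would carry out without detailing them here.
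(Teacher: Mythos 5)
Your starting point (the Cauchy criterion along a sequence of partitions) and your diagnosis of the central difficulty (one cannot distribute $\|\cdot\|_{\infty}$ over the blocks of $P_n$) both match the paper. But the resolution you propose does not close, and it is aimed at a stronger statement than the one asserted. Note that the conclusion is an \emph{order} inequality in $C(\Omega)$: $Ob(f,E)$ is the lattice supremum of the moduli $|\sigma(f,\Pi''_E)-\sigma(f,\Pi'_E)|$, hence itself an element of $C(\Omega)$, and $\sum_E Ob(f,E)\leq u/n$ is a pointwise bound on $\Omega$ --- strictly weaker than the norm-sum bound $\sum_E\|\cdot\|_{\infty}\leq 1/n$ that your sign-selection argument is trying to establish (your closing line ``$\sum_{E}Ob(f,E)\cdot u\leq u/n$'' shows you are treating $Ob$ as a scalar). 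The sign-selection step itself fails: after swapping subpartitions according to $\varepsilon_j$ you hold a single global difference $\sum_j\varepsilon_j d_j$ of small sup-norm, but to recover $\|d_j\|_{\infty}$ you must evaluate at the block-dependent point $\omega_j$, and one function can only be evaluated at one point of $\Omega$ at a time; at $\omega_1$ the contributions of the other blocks need not be near their norms and may be negative, so ``evaluated appropriately'' does not dominate $\sum_j Ob(f,E_j)$. This is precisely the obstruction that makes the strong (norm-summed) Henstock lemma fail outside the scalar case.

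The paper's proof never leaves the order structure. From $|\sigma(f,\Pi')-\sigma(f,\Pi'')|\leq u/n$ it drops the modulus, writes $\Pi'$ and $\Pi''$ as amalgams of arbitrary subpartitions of the blocks of $P_n$, and then, one block at a time, takes the supremum over $\Pi'_{E_j}$ and the infimum over $\Pi''_{E_j}$, using $Ob(f,E_j)=\sup_{\Pi'_{E_j}}\sigma(f,\Pi'_{E_j})-\inf_{\Pi''_{E_j}}\sigma(f,\Pi''_{E_j})$ (these lattice extrema exist, $\Omega$ being stonian); since order inequalities against the fixed upper bound $u/n$ survive these operations, running through all blocks yields $\sum_j Ob(f,E_j)\leq u/n$ directly, with no norm additivity ever invoked. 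Your ``cleaner alternative'' (fix $\omega$, run the scalar argument, sum over $j$, vary $\omega$ at the end) is this proof in pointwise disguise and could be completed, but as written it is only a gesture: it is entangled with a sentence you yourself abandon as false, and it skips the one delicate point, namely that the lattice supremum defining $Ob(f,E)$ may exceed the pointwise supremum at a given $\omega$, so bounding the latter for every $\omega$ does not immediately bound the former. The block-by-block sup/inf argument against the fixed continuous majorant $u/n$ is the missing idea.
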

\begin{proof}
First observe that, thanks to the Cauchy criterion, a sequence $(\Pi_n)_n$ of partitions   exists,  such that, 
for every integer $n$
\begin{eqnarray}\label{primocauchy}
\left|\sum_{F'\in \Pi'}f(s)m(F')-\sum_{F''\in \Pi''}f(t)m(F'') \right|\leq \frac{u}{n}
\end{eqnarray}
(with obvious meaning of symbols)
holds, for all partitions $\Pi', \ \Pi''$ finer than $\Pi_n$. Now, take any integer $n$ and, for each element 
$E$ of $\Pi_n$, 
consider two arbitrary partitions $\Pi'_E$ and $\Pi''_E$ of $E$.
Then, taking the {\em union} of the partitions $\Pi'_E$ as $E$ varies, and making the same operation with
the partitions $\Pi''_E$,  two partitions of $T$ are obtained, finer than $P_n$, for which (\ref{primocauchy})
 holds true. From  (\ref{primocauchy}),  obviously it follows
\begin{eqnarray}\label{secondocauchy}
\sum_{F'\in \Pi'}f(s)m(F')- \sum_{F''\in \Pi''}f(t)m(F'')\leq \frac{u}{n}.
\end{eqnarray}
Now,   let $E_1$ be the first element of $\Pi_n$.
In the summation at left-hand side, fix all the $F's$ and the $F''s$ that are not contained in $E_1$. 
Taking the supremum when the remaining $F's$ and $F''s$  vary in all possible ways, it follows
\begin{eqnarray*}
&& \sup_{\Pi'_{E_1}}\sigma(f,\Pi'_{E_1})-\inf_{\Pi''_{E_1}}\sigma(f,\Pi''_{E_1})+
 \sum_{\substack{F'\in \Pi', \\ F'\not\subset E_1}}f(s)m(F')-
 \sum_{\substack{F''\in \Pi'', \\ F''\not\subset E_1}} f(t)m(F'')
\leq \frac{u}{n},
\end{eqnarray*}
namely
\begin{eqnarray*}
 Ob(f,E_1)+ \hskip-2mm \sum_{\substack{F'\in \Pi',\\ F'\not\subset E_1}}f(s)m(F') - \hskip-2mm
 \sum_{\substack{F''\in \Pi'',\\F''\not\subset E_1}}f(t)m(F'')\leq \frac{u}{n}.
\end{eqnarray*}
In the same way , fixing all the $F'$ and $F''$ that are not contained in the second subset of $\Pi$, (say $E_2$), 
and making the same operation, it follows 
\begin{eqnarray*}
\hskip-1cm&& 
Ob(f,E_1)+Ob(f,E_2)+\hskip-2mm \sum_{\substack{F'\in \Pi',\\ F'\not\subset E_1\cup E_2}}f(s)m(F')  
- \sum_{\substack{F''\in \Pi'',\\F''\not\subset E_1\cup E_2}}f(t)m(F'')\leq \frac{u}{n}
\end{eqnarray*}
Now, it is clear how to deduce the assertion.
\end{proof} \\

 Concerning the previous Proposition, we remark that, unless the space $X$ is finite-dimensional, a similar conclusion fails to hold if the absolute value is replaced by the norm: this is noteworthy if one considers its consequences, in particular the corollary \ref{4.9}.  

The following result states an easy consequence of  Proposition \ref{henlemma} and it  can be viewed 
as a Henstock Lemma result.
\begin{corollary}\label{henstoc2}
Let $g:T\to \erre$ be any mapping, then $g$ is Gould-integrable  if and only if  there exists a sequence $(\Pi_n)_n$ of partitions, 
such that, for every 
$n$ and every partition $\Pi$ finer than $\Pi_n$
\begin{description}
\item[\ref{henstoc2}.1)] 
$\sum_{E\in \Pi} \left|g(\tau_E)m(E)-\int_E g dm \right| \leq \frac{u}{n},$
where $\tau_E$ is any point in the set $E$.
\end{description}
\end{corollary}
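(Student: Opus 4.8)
The plan is to deduce this from Proposition \ref{henlemma} applied to $f=g$, after first upgrading from the ``two-sided'' oscillation bound $\sum_{E}Ob(g,E)\le u/n$ to the ``one-sided'' estimate comparing a single Riemann tag value with the genuine integral $\int_E g\,dm$. First I would invoke Proposition \ref{gouldsotto}: since $g$ is Gould-integrable on $T$, the restriction $g\mathbf 1_E$ is integrable for every $E\in\As$, so $\int_E g\,dm$ is well defined; one should also check (via the additivity argument already used in Proposition \ref{additivita0}, or by refining a common partition) that for a partition $\Pi$ of $T$ one has $\int_T g\,dm=\sum_{E\in\Pi}\int_E g\,dm$, so that the quantity $\int_E g\,dm$ behaves additively along a partition.

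Next, fix $n$ and take the partition $P_n$ furnished by Proposition \ref{henlemma}, so that $\sum_{E\in\Pi}Ob(g,E)\le u/n$ for every $\Pi\ge P_n$. Fix such a $\Pi$ and fix tags $\tau_E\in E$. The key observation is that for each single $E\in\Pi$,
\[
\left|g(\tau_E)m(E)-\int_E g\,dm\right|\le Ob(g,E).
\]
To see this, note that $g(\tau_E)m(E)=\sigma(g,\Pi'_E)$ for the trivial one-set subpartition $\Pi'_E=\{E\}$ with tag $\tau_E$; meanwhile $\int_E g\,dm$ is the limit of $\sigma(g,\Pi''_E)$ along subpartitions $\Pi''_E$ of $E$. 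Since, by its very definition, $Ob(g,E)$ dominates $\|\sigma(g,\Pi''_E)-\sigma(g,\Pi'_E)\|_\infty$ for every pair of subpartitions with arbitrary tags — in particular with $\Pi'_E=\{E\}$ — passing to the limit over $\Pi''_E$ (using continuity of the norm in $C(\Omega)$ and the fact that the order interval $[-Ob(g,E),Ob(g,E)]$ is norm-closed in the unit order, equivalently $|\,\cdot\,|$-closed) yields $|g(\tau_E)m(E)-\int_E g\,dm|\le Ob(g,E)$ in the lattice sense. Summing over $E\in\Pi$ and using Proposition \ref{henlemma} gives $\sum_{E\in\Pi}|g(\tau_E)m(E)-\int_E g\,dm|\le\sum_{E\in\Pi}Ob(g,E)\le u/n$, which is exactly \ref{henstoc2}.1).

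The main obstacle is the passage from the finite Cauchy-type estimate (which only compares two explicit subpartitions) to the limit statement involving $\int_E g\,dm$: one must be careful that $Ob(g,E)$, defined as a supremum over pairs of subpartitions, really controls the distance to the \emph{limit} of the net $\sigma(g,\Pi''_E)$, not merely the distance between two sampled terms. This is handled by the standard fact that if a net lies eventually within a closed set (here the order interval determined by $Ob(g,E)$ around $g(\tau_E)m(E)$) then so does its limit; in $C(\Omega)$ with the unit norm $\|\cdot\|_u=\|\cdot\|_\infty$ the relevant order intervals are norm-closed, so the argument goes through. A minor secondary point is the additivity $\sum_{E\in\Pi}\int_E g\,dm=\int_T g\,dm$, which is needed only if one wants to reconcile the left-hand sum with a global quantity, but for the statement as written each term $\int_E g\,dm$ is treated individually and no such reconciliation is required.
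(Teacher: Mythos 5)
Your argument is correct and is exactly the intended derivation: the paper omits the proof, calling the corollary ``an easy consequence'' of Proposition \ref{henlemma}, and your route --- taking $\Pi'_E=\{E\}$ with tag $\tau_E$ in the definition of $Ob(g,E)$, passing to the limit of the net $\sigma(g,\Pi''_E)$ (legitimate since the order interval $\{x:|x-a|\leq Ob(g,E)\}$ is norm-closed in $C(\Omega)$), and summing over $E\in\Pi$ --- is the standard way to carry it out. Your side remarks (integrability on each $E$ via Proposition \ref{gouldsotto}, and the fact that additivity of $E\mapsto\int_E g\,dm$ is not actually needed for the statement as written) are also accurate.
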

\begin{proof}
The  "if part"  is a consequence of Proposition \ref{henlemma}.
\end{proof} \\

Now we want to focus our attention on a particular type of set valued mappings $m$ which will be useful 
in the last section, i.e. the {\em Gould-integrable} ones. 
 A similar notion was also  given  in \cite[Definition 1.1]{mimmoroma}, though for set functions taking 
values more generally in a vector lattice.
Notice that we will  use the symbol $\mathbf{1}$ to denote the real-valued function on $T$,
defined by $\mathbf{1} (t) \equiv 1$, while the symbol $u$ denotes the element of $C(\Omega)$ 
constantly equal to 1.
We remember also that it is well-known that the norm $\|\cdot\|_{\infty}$ coincides with the {\em unit} 
norm $\|\cdot\|_u$.

\begin{definition}\label{minteg}\rm 
Given a mapping $m:\mathcal{A}\to C(\Omega)$, such that $m(\emptyset)=0$, 
$m$ is said to be {\em Gould-integrable} if the  mapping $\mathbf{1}: T \to \mathbb{R}$ 
 is Gould integrable with respect to $m$. 
We denote by $\upsilon_m(T):= \int_T \mathbf{1} dm$ its integral.
\end{definition}

By  Proposition \ref{gouldsotto}, if $m$ is Gould integrable, then $m$ is integrable in every 
measurable set $A\subset T$. Moreover, 
denoting by $\upsilon_m(A)$ the integral of $m$ in $A$,
 the mapping $A\mapsto \upsilon_m(A)$ is  finitely additive, as will be proved in the Proposition \ref{additivita0}.
In other words, $m$ is Gould-integrable if and only if there exists  $\upsilon_m : \mathcal{A}\to C(\Omega)$
such that, for every set $A\in \As$ and for every $\varepsilon>0$ a partition $P\in \Ps$ can be found, such that
$\left\|\sum_{I\in P'}m(I\cap A)-\upsilon_m(A) \right\|_{\infty}\leq \varepsilon$
holds, as soon as $P'$ is finer than $P$.
When this is the case, then $\upsilon_m$ 
is called the {\em integral function} of $m$.\\

Examples of non-additive set functions that are Gould integrable could be the following:
\begin{example}\label{ex-int} \rm 
Let  $T=[0,1]$ endowed with the usual Borel $\sigma$-algebra $\Sigma$ and Lebesgue measure 
$\lambda$.  
\begin{enumerate}[\rm \ref{ex-int}.a)]
\item
Let 
$m(A) = \lambda^2 (A) \cdot u$
for every $A\in \Sigma$. Clearly $m$ is not additive (it is superadditive), but it has null integral:
 indeed, for any $\varepsilon>0$ take any partition $P$ of $[0,1]$ consisting of pairwise disjoint 
measurable sets $A_i$, each with measure less than $\varepsilon$. Then
$\|\sum_i m(A_i)\|_{\infty}=\sum_i \lambda(A_i)^2\leq \sum_i\lambda(A_i)\varepsilon=\varepsilon.$
Of course, the same happens for every finer partition than $P$.
\item
Let $\gamma(A) = (\lambda(A) - \lambda^2 (A)) \cdot u$, 
then
$\gamma$ is non additive (it is subadditive)  and integrable too.
\item 
 Let $X$ be any
finite-dimen\-sional Banach space, whose unit ball is denoted by $B_X$. Let  $(W_t)_t$ denote the
 standard scalar
 Brownian motion, $t\in [0,T^{\star}]$, and set
$ (B_t)_t = (W_t B_X )_t , \, 
t \in [0,T^{\star}].$
 This clearly defines a set-valued process.
If $U$ denotes the R{\aa}dstr\"{o}m  embedding of
the family of compact and convex subsets of $X$  
into $C(\Omega)$, as we will 
recall 
in Theorem \ref{LABU}, then $U(B_X)=u$, 
where $u$ is the element of $C(\Omega)$ constantly equal to 1. Therefore, 
\(t\mapsto W_t u\)  defines a $C(\Omega)$-valued process.
Now, let $\mathcal{A}$ be the algebra in $[0,T^{\star}]$ generated by all (half-open) subintervals, and define 
$m: \mathcal{A}\to C(\Omega)$ 
in the following way: 
\[ m(A) = \left\{ \begin{array}{ll}
(W_b-W_a)^2 \cdot u & A=]a,b] \\
\sum_i(W_{b_i}-W_{a_i})^2 \cdot u &
A \mbox{ is the finite union of (maximal) disjoint intervals }\\ & ]a_i,b_i].
\end{array} \right. \]
Then, for any partition $P$ of $[0,T^{\star}]$, $P:=\{I_1,...,I_k\}$ into pairwise disjoint elements 
of $\mathcal{A}$, define
$S(P)=\sum_{j=1}^k m(I_j)$
and observe that, thanks to well-known properties of the Brownian Motion, this quantity tends to 
$T^{\star} \cdot u$ in $L^2$ when the maximum length of the partitions tends to 0. Therefore, 
at least for this type of convergence, the measure $m$ has integral $T^{\star} \cdot u$,
 and in every interval $[a,b]\subset [0,T^{\star}]$ the integral is $(b-a) \cdot u$.
\end{enumerate}
\end{example}
\begin{proposition}\label{additivita0}
If $m$ is Gould integrable then its integral function 
 $\upsilon_m$, defined in $\As$ as
$\upsilon_m(A)=\int_T \mathbf{1}_A\, dm$, is additive.
\end{proposition}

\begin{proof}
It follows immediately from the Remark \ref{somma}.
\end{proof} \\

So the Gould integrability of $m$ allows to link $m$ with $\upsilon_m$ which is an additive set function and  
 clearly, $m$ is additive if and only if it is integrable and $m=\upsilon_m$.
 Moreover, for bounded functions, the following  characterization can be given:
\begin{corollary}\label{4.9}
Assume that $m$ 
is integrable. Then a bounded function $f:T\to \mathbb{R}$ is Gould-integrable with respect to $m$ if 
and only if it is with respect to $\upsilon_m$, and the two integrals agree.
\end{corollary}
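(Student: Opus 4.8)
The plan is to exploit the approximation of $m$ by its integral function $\lambda_m$, using Corollary \ref{henstoc2} applied to the constant function $\mathbf{1}$. Since $m$ is integrable, Corollary \ref{henstoc2} (with $g\equiv 1$) yields, for each $n$, a partition $P_n$ such that for every finer partition $\Pi=\{(E,\tau_E)\}$ one has $\sum_{E\in\Pi}\|m(E)-\lambda_m(E)\|_\infty\leq \frac{1}{n}$ (identifying $u$ with the constant $1$). This is the key estimate: it says that, along sufficiently fine partitions, the Riemann-type sums $\sigma_m(f,\Pi)$ and $\sigma_{\lambda_m}(f,\Pi)$ are uniformly close whenever $f$ is bounded, because
$$\left\|\sigma_m(f,\Pi)-\sigma_{\lambda_m}(f,\Pi)\right\|_\infty=\left\|\sum_{E\in\Pi}f(\tau_E)\big(m(E)-\lambda_m(E)\big)\right\|_\infty\leq \|f\|_\infty\sum_{E\in\Pi}\|m(E)-\lambda_m(E)\|_\infty\leq \frac{\|f\|_\infty}{n}.$$

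First I would fix a bound $K$ for $|f|$ and set up the two nets of sums. For the forward direction, assume $f$ is Gould $m$-integrable with integral $J$; fix $\varepsilon>0$, choose $n$ with $K/n<\varepsilon$, choose a partition $Q_\varepsilon$ witnessing $m$-integrability of $f$ to within $\varepsilon$, and let $P$ be a common refinement of $Q_\varepsilon$ and the partition $P_n$ from the displayed estimate. Then for every $\Pi\geq P$,
$$\|\sigma_{\lambda_m}(f,\Pi)-J\|_\infty\leq \|\sigma_{\lambda_m}(f,\Pi)-\sigma_m(f,\Pi)\|_\infty+\|\sigma_m(f,\Pi)-J\|_\infty\leq \frac{K}{n}+\varepsilon<2\varepsilon,$$
so $f$ is $\lambda_m$-integrable with the same integral $J$. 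The converse direction is symmetric, once I observe that $\lambda_m$ is itself Gould-integrable: indeed $\lambda_m$ is finitely additive by Proposition \ref{additivita0}, hence $\lambda_{\lambda_m}=\lambda_m$, and the estimate $\sum_{E\in\Pi}\|m(E)-\lambda_m(E)\|_\infty\leq 1/n$ is visibly symmetric in $m$ and $\lambda_m$. So the same argument run with the roles of $m$ and $\lambda_m$ exchanged gives that $\lambda_m$-integrability of $f$ implies $m$-integrability, with equal integrals.

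The main obstacle is purely bookkeeping: making sure that the partition $P_n$ supplied by Corollary \ref{henstoc2} can legitimately be intersected with an arbitrary refinement $Q_\varepsilon$ so that the resulting refinement simultaneously inherits both the smallness of $\sum\|m(E)-\lambda_m(E)\|$ and the closeness of $\sigma_m(f,\cdot)$ to $J$; this is guaranteed because both properties are stable under passing to finer partitions (the summed-oscillation bound of Corollary \ref{henstoc2} is monotone in the partition, as is the Cauchy/convergence estimate of Definition \ref{g}). No completeness argument is needed here since both integrals are assumed or shown to exist; the corollary only asserts that they coincide. I would close by remarking that boundedness of $f$ is used exactly once, in pulling $\|f\|_\infty$ out of the sum, and cannot be dispensed with in this generality.
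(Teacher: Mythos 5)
Your proposal is correct and follows essentially the same route as the paper: both apply Corollary \ref{henstoc2} with $g\equiv 1$ to get $\sum_{E\in\Pi}\|m(E)-\lambda_m(E)\|_{\infty}\leq\varepsilon$ on all partitions finer than some $P_2$, pass to a common refinement with the partition witnessing $m$-integrability of $f$, pull the bound $K=\|f\|_{\infty}$ out of the sum via the triangle inequality, and dispose of the converse by the symmetry of the estimate. No substantive differences.
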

\begin{proof}
Assume that $f$ is integrable with respect to $m$, and denote by $K$ any majorant for $|f|$. Now, fix arbitrarily
 $\varepsilon>0$: correspondingly, there exists a partition $P_1$ such that 
\begin{eqnarray*}
\left\|\sum_{I\in P}f(t_I)m(I)-\int_T f dm \right\|_{\infty} \leq \varepsilon\hskip.7cm
{\rm }\ i.e.\hskip.7cm \left|\sum_{I\in P}f(t_I)m(I)-\int_T f dm \right| \leq \varepsilon u
\end{eqnarray*}
holds, for every partition $P$, finer than $P_1$.
Let $n$ be such that $1/n \leq \varepsilon$,
 by the Corollary \ref{henstoc2}, for $g=1$, there exists also a partition $P_2$ such that
$\sum_{E\in \Pi}|m(E)-\upsilon_m(E)|\leq  \varepsilon u$
holds, for every partition $\Pi$ finer than $P_2$.
So, if $P$ is any partition finer than $P_1\vee P_2$, one gets
\begin{eqnarray*}
&&  \hskip-.8cm \left|\sum_I f(t_I)\upsilon_m(I)-\int_T f dm \right| \leq
\left| \sum_I [f(t_I)\upsilon_m(I)-f(t_I)m(I)]  \right| + 
\left|\sum_I f(t_I)m(I)-\int_T f dm  \right| \leq\\
&&  \hskip-.8cm \leq \sum_I \left| f(t_I)\left(\upsilon_m(I)-m(I)\right) \right| + \vp u
 \leq
 K\sum_I |m(I)-\upsilon_m(I)|+ \vp u\leq (1+K)\varepsilon u.
\end{eqnarray*}
So
$ \left\|\sum_I f(t_I)\upsilon_m(I)-\int_T f dm \right\|_{\infty} \leq (1+K) \varepsilon.$
This clearly suffices to conclude that $f$ is integrable with respect to $\upsilon_m$ and the two integrals agree.
A similar argument can be used to prove also the reverse implication. Hence the proof is finished. 
\end{proof} \\

We remark that, for bounded functions, the Corollary \ref{4.9} allows to deal the non-additive case by means of the additive one, similarly as the Stone estension Theorem which connects  $L^1(m)$, when $m$ is finitely additive, with $L^1(\nu)$, where $\nu$ is the countably additive {\em transform} of $m$.

We can observe that the results obtained in Propositions \ref{gouldsotto}, \ref{funzionidiscrete} and \ref{additivita0} 
 are still valid in an arbitrary Banach space and not only in $C(\Omega)$ and 
we remember also that   notions of order-type integrals have also been investigated, for functions 
taking their values in ordered 
vector spaces, and in Banach lattices: see for example \cite{ref6,SISY,bcs2014}.
\section{A Radon-Nikod\'{y}m type theorem}\label{rnsec}

This section deals with a Radon-Nikod\'{y}m type theorem for multimeasures using the notion of exhaustion, 
following a method
 of Maynard  \cite{ref14,ref11,ms94b,
s2}.  
We recall that the Hausdorff distance $h$ is defined by
$h(A,B)=\max\{e(A,B),e(B,A)\},$
where the {\em excess} $e(A,B)$ is defined as
$e(A,B):=\sup_{a\in A}d(a,B):=\sup_{a\in A}  \inf_{b \in B} \| a-b\|.$
In particular
\begin{remark} \rm \label{rem_H}
If $A\subset B$,  then $e(A,B)=0$ and $h(A,B)=e(B,A)$.
Moreover, observe that for any non-empty bounded set $A\subset X$, and any pair $(t,b)$ of elements of $X$, 
$|d(t,A)-d(b,A)|\leq \|b-t\|.$
Indeed, let $\sigma:=\|b-t\|$ and fix arbitrarily $\vp>0$. Then there exists $a\in A$ such that
 $d(t,A)\geq d(t,a)-\vp\geq d(b,a)-\sigma-\vp\geq d(b,A)-\vp-\sigma.$
By the arbitrariness of $\vp$, it follows that $d(t,A)-d(b,A)\geq -\sigma$, i.e.
$d(b,A)-d(t,A)\leq \sigma.$
Exchanging the roles between $t$ and $b$, one obtains
$d(t,A)-d(b,A)\leq \sigma$
and therefore $|d(t,A)-d(b,A)|\leq \|b-t\|$.
Another useful fact is the following:
 for every pair of bounded subsets $A,B\subset X$,  $e(B,A)=e(cl(B),A).$
Of course, since $B\subset cl(B)$, it is clear that $e(B,A)\leq e(cl(B),A)$.\\
 Viceversa, fix $\varepsilon>0$: 
then  $j\in cl(B)$ exists, such that $d(j,A)\geq e(cl(B),A)-\vp/2$. Now, let $h\in B$ be such 
that $\|h-j\|\leq \vp/2$: then $|d(h,A)-d(j,A)|\leq \vp/2$, and so 
$e(B,A)\geq d(h,A)\geq d(j,A)-\vp/2\geq e(cl(B),A)-\vp.$
By the  arbitrariness of $\vp$, this gives the reverse inequality
and the proof is complete.
\end{remark}

By \cite[Proposition 1.19 Chapter 7]{Hu} we have that 
\begin{proposition}\label{riparazione}
Let 
$(A_n)_n$ be any increasing sequence of  compact convex subsets of $X$, and assume that a 
compact convex set $K$ exists, such that $A_n\subset K$ for all $n$.
Then $\lim_n h(A_n,J)=0$,
where $J:=cl(\bigcup_n A_n)$.
\end{proposition}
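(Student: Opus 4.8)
The plan is to reduce the Hausdorff distance to a single excess and then to upgrade a pointwise density statement to a uniform one by compactness.

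First I would record that $J$ is compact: it is the closure of a subset of $K$, and $K$, being compact, is closed, so $J\subseteq K$ is a closed subset of a compact set. (In passing, $\bigcup_n A_n$ is convex, being an increasing union of convex sets, hence so is its closure $J$; thus $J\in ck(X)$. This is not needed for the limit itself, but it matters for the applications in the sequel.) Since $A_n\subseteq J$ for every $n$, Remark~\ref{rem_H} gives $e(A_n,J)=0$, and therefore $h(A_n,J)=e(J,A_n)=\sup_{x\in J}d(x,A_n)$. So the whole statement amounts to showing that $\sup_{x\in J}d(x,A_n)\to 0$ as $n\to\infty$.

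Then, fixing $\varepsilon>0$, I would use the compactness of $J$ to pick finitely many points $x_1,\dots,x_p\in J$ with $J\subseteq\bigcup_{i=1}^p B(x_i,\varepsilon)$, where $B(x_i,\varepsilon)=\{y\in X:\|y-x_i\|<\varepsilon\}$. For each $i$, since $x_i\in J=cl(\bigcup_n A_n)$, there are an index $n_i$ and a point $y_i\in A_{n_i}$ with $\|x_i-y_i\|<\varepsilon$; put $N:=\max_i n_i$. Because the sequence $(A_n)_n$ is increasing, $y_i\in A_n$ for all $i$ whenever $n\ge N$. Hence, for $n\ge N$ and any $x\in J$, choosing $i$ with $x\in B(x_i,\varepsilon)$, one gets $d(x,A_n)\le\|x-y_i\|\le\|x-x_i\|+\|x_i-y_i\|<2\varepsilon$; taking the supremum over $x\in J$ yields $h(A_n,J)=e(J,A_n)\le 2\varepsilon$ for every $n\ge N$, and the arbitrariness of $\varepsilon$ concludes the argument.

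I do not expect a serious obstacle here: the only points requiring care are the identification $h(A_n,J)=e(J,A_n)$ (legitimate because $A_n\subseteq J$, via Remark~\ref{rem_H}) and the passage from ``each point of $J$ is approximated by some $A_n$'' to ``every point of $J$ is approximated by one single $A_n$'', which is exactly where the compactness of $J$, hence the hypothesis that all the $A_n$ lie inside the compact set $K$, is essential. A sequential alternative would also work but is longer: arguing by contradiction, one extracts $x_k\in J$ with $d(x_k,A_{n_k})\ge\varepsilon$, passes to a subsequence $x_k\to x\in J$ by compactness, and uses the monotonicity of $(A_n)_n$ together with Remark~\ref{rem_H} to deduce $d(x,A_N)\ge\varepsilon/2$ for every $N$, contradicting $x\in cl(\bigcup_n A_n)$.
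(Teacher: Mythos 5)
Your proof is correct. It differs from the paper's in execution, though both rest on the compactness of $J$ (as a closed subset of $K$). You give a direct argument: cover the compact set $J$ by finitely many $\varepsilon$-balls centred at points of $J$, approximate each centre by a point of some $A_{n_i}$, and use the monotonicity of the sequence to place all these approximants in a single $A_N$, yielding $e(J,A_n)\leq 2\varepsilon$ for $n\geq N$. The paper instead argues by contradiction: assuming $\inf_n e(J,A_n)\geq\alpha>0$, it first replaces $e(J,A_n)$ by $e(H,A_n)$ with $H=\bigcup_n A_n$ (using the part of Remark~\ref{rem_H} asserting $e(B,A)=e(cl(B),A)$), picks $b_n\in H$ with $d(b_n,A_n)>\alpha/2$, extracts a norm-convergent subsequence $b_{n_i}\to j\in J$ by compactness, and then finds $h\in H$ near $j$ with $d(h,A_{n_i})\geq\alpha/8$ for all $i$, which is absurd since $h$ lies in some $A_{n_i}$. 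This is essentially the ``sequential alternative'' you sketch at the end. Your finite-net version is arguably cleaner: it avoids the cascade of constants $\alpha/2,\alpha/4,\alpha/8$, does not need the closure-invariance of the excess, and produces an explicit index $N$ beyond which the estimate holds. Both proofs correctly identify $h(A_n,J)$ with $e(J,A_n)$ via $A_n\subseteq J$, and both use the monotonicity of $(A_n)_n$ and the containment in $K$ in an essential way.
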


For set valued functions we recall from   the following concept: 
\begin{definition}\label{msm}\rm
A set function
 $M:\mathcal{A}\to ck(X)$ is said to
be a
\textit{multisubmeasure} if:
$M (\emptyset )=\{0\}$ and 
$M (A\cup B)\subset M(A) + M(B),$ 
for every $A,B\in \mathcal{A},$ with $A\cap B=\emptyset$ 
(or, equivalently, $M (A\cup B)\subset M (A) + M
(B),$ for every $A,B\in \mathcal{A}$).\\
$M$ is said to be a 
 \textit{fuzzy multisubmeasure}  if moreover:
$M (A)\subset M (B),$ for every $A,B\in \mathcal{A},$ with
$A\subset B$ (that is, $M$ is \textit{ monotone {\rm on}
$\mathcal{A}$}).
If
$M (A\cup B)\subset M(A) + M(B),$ 
for every $A,B\in \mathcal{A},$ with $A\cap B=\emptyset$
then $M$ is said to be a \textit{multimeasure} that is, $M$
 is finitely additive.
\end{definition}
Examples of   fuzzy multisubmeasures $M$ are given in \cite{ref6}, moreover we can consider also
 $M(A) = [0, \lambda(A) - \lambda^2(A)] \cdot u$, where $\lambda$ and $u$ are as given in Example 
\ref{ex-int}.

\begin{definition}\label{bv}\rm
Let $M :\mathcal{A}\to ck(X)$ be a multivalued set function, with $M(\emptyset)=\{0\}$. 
Consider the following set functions associated to $M$:
\begin{enumerate}[\bf \ref{bv}.a)]
\item $|M(\cdot)|_h$ defined by $|M(A)|_h=h(M(A),\{0\})=\sup\{\|x\|: x\in M(A)\}$ for every $A\in \As$.
\item $v_M(\cdot)$ defined by $v_M(A)=\sup \{\sum\limits_{i=1}^{n}|M(E_{i})|_h\},$ 
for every $A\in \mathcal{A},$ where the supremum is extended over all finite partitions $\{E_{i}\}_{i=1}^n$ of $A.$
$v_M(\cdot)$ is said to be \textit{the variation} of $M $. 
$M$ is said to be \textit{of finite variation} if $v_M(T)<\infty $.
\end{enumerate}
\end{definition}

In the sequel, let $M :\mathcal{A}\to ck(X)$ be a  fuzzy multisubmeasure  
and $f$ 
 a non negative real-valued function. 
Let $\sigma(P)=\sigma_{f,M}(P) =\sum_{i=1}^{n}f(t_{i}) M (A_{i})$,
for every partition $P=\{A_{i}\}_{i=1, \ldots, n}$ of $T$ and every $t_{i}\in A_{i},i=1,\ldots,n$. Then
\begin{definition}\label{gm}\rm 
$f$ is said to be $M $-\textit{integrable (on $T$)} if the net 
$(\sigma (P))_{P\in (\mathcal{P},\leq )}$ is convergent in $(ck(X),h)$,
where $\mathcal{P}$ is the set of all partitions of $T$ and
$"\leq "$ is the order relation on $\mathcal{P}$ given in
Definition \ref{g}.a).
 Its limit is called \textit{the integral of $f$\ on\ $T$} with respect to the fuzzy multisubmeasure $M$ and is denoted by
$\int_{T}f dM$.\\
 If $B\in \mathcal{A},$ then $f$ is said to be $M $-\textit{integrable on $B$} if the restriction 
$f|_{B}$ of $f$ to $B$ is $M $-integrable on 
$(B,\mathcal{A}_{B},M _{B})$.
\end{definition}

In other words, $f$ is $M$-integrable in $T$ if there exists an element $J\in ck(X)$, such that for every
 $\varepsilon>0$ 
there exists  a partition $P\in \mathcal{P}$ with the property that $h(\sigma(P'),J)\leq \varepsilon$ holds true,
 for every partition $P'$ finer than $P$.\\

As well highlighted in \cite{L1} the space $ck(X)$ is a sub-near vector lattice of  $cwk(X)$ (non empty, weakly compact and convex subsets of $X$) with respect to the operations of additions and multiplication by positive scalars and to order induced by $cwk(X)$; moreover if $X$ is not finite dimensional, this hyperspace can be considered as a subset of $S_1 =cbf(X)$ (non empty, convex, closed, bounded subset of $X$) and 
it can be embedded, using the structure  of $S_1$, provided that 
$u=B_X$, $\vec{0}= \{0\}$, in such a way that the norm of the embedding space is a Riesz norm. So, using Kakutani's M-space representation theorem, the near vector lattice $ck(X)$ with order units, endowed with the Hausdorff metric can be represented in terms of $C(\Omega)$ spaces, as shown in:

\begin{theorem}\label{LABU}{\rm(\cite[Theorem 5.7]{L1}).}
Let $X$ be any Banach space. Then there exist a compact, stonian,  Hausdorff space $\Omega$ and an
isometry   $U:ck(X) \to C(\Omega)$ such that
\begin{enumerate}[\rm \bf  \ref{LABU}.1)]
\item $U(\alpha A+\beta C)=\alpha U(A)+\beta U(C)$ for all $\alpha,\beta\in \mathbb{R}^+$ and 
 $A,C\in ck(X)$.
\item $h(A,C)=\|U(A)-U(C)\|_{\infty}$ for all  $A,C\in ck(X)$.
\item$U(ck(X))$ is norm-closed in $C(\Omega)$.
\item $U(\overline{co}(A\cup C))=\max\{U(A),U(C)\}$,  for all $  A,C\in ck(X)$.
\end{enumerate}
\end{theorem}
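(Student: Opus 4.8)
\textbf{Proof proposal for Theorem \ref{LABU}.} This is a citation to \cite[Theorem 5.6]{L1}, so strictly speaking no new proof is required; nonetheless, the natural way to establish such an embedding (and the way one would reconstruct it) proceeds in three stages. First, I would represent each $A\in ck(X)$ by its \emph{support function} $s_A\colon X^*\to\mathbb{R}$, $s_A(\xi)=\sup_{x\in A}\langle\xi,x\rangle$, restricted to the unit sphere $S_{X^*}$ of the dual. The classical H\"ormander embedding theorem says that $A\mapsto s_A$ is an isometric, positively-homogeneous, additive map from $ck(X)$ (with the Hausdorff metric) into $C_b(S_{X^*})$ equipped with the supremum norm, and that it sends $\overline{\mathrm{co}}(A\cup C)$ to $\max\{s_A,s_C\}$ (since the support function of a convex hull of a union is the pointwise maximum). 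That already gives properties \ref{LABU}.1), \ref{LABU}.2) and \ref{LABU}.4), but with $S_{X^*}$ in place of a \emph{compact stonian} $\Omega$, and without the closedness statement \ref{LABU}.3).

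The second stage upgrades the target space. The sublattice $\mathcal{L}:=\{s_A-s_C : A,C\in ck(X)\}$ of $C_b(S_{X^*})$ generated by the support functions is a vector lattice that separates points and contains the constants; taking its uniform closure $\overline{\mathcal{L}}$ yields an $M$-space with unit (an $AM$-space with order unit, namely the constant function $1$, which corresponds to the unit ball $B_X$). By the Kakutani representation theorem for $AM$-spaces with unit, $\overline{\mathcal{L}}$ is lattice-isometrically isomorphic to $C(\Omega)$ for a suitable \emph{compact Hausdorff} space $\Omega$ (the state space / maximal ideal space), and because $C(\Omega)$ is Dedekind-complete precisely when $\Omega$ is stonian, one checks that the relevant completeness of the hyperspace forces $\Omega$ to be stonian — this is where the structure of $ck(X)$ enters in an essential way. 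Composing H\"ormander's embedding with this Kakutani isomorphism produces the map $U$ satisfying \ref{LABU}.1), \ref{LABU}.2) and \ref{LABU}.4).

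Finally, for \ref{LABU}.3) I would argue that $U(ck(X))$ is norm-closed in $C(\Omega)$: if $U(A_n)$ is a Cauchy sequence in $C(\Omega)$, then by the isometry \ref{LABU}.2) the sets $A_n$ form a Cauchy sequence in $(ck(X),h)$, which is complete (\cite[Theorem II-14]{cv}), so $A_n\to A$ for some $A\in ck(X)$; by continuity of $U$ the limit of $U(A_n)$ is $U(A)\in U(ck(X))$. The main obstacle — and the only genuinely delicate point — is the \emph{stonian} refinement in the second stage: producing an \emph{extremally disconnected} $\Omega$ rather than merely a compact Hausdorff one requires identifying the order-completeness of the lattice of support functions coming from $ck(X)$, and this is precisely the content that makes \cite[Theorem 5.6]{L1} more than a routine application of H\"ormander plus Kakutani. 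Since the paper only \emph{uses} this theorem (and only for positive integrands, where the cone structure is all that matters), I would simply cite \cite{L1} and move on, flagging that all four listed properties are exactly what is needed to transport the set-valued Gould integral to a scalar Gould integral in $C(\Omega)$.
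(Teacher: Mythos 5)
The paper gives no proof of this theorem — it is quoted verbatim from \cite[Theorem 5.6]{L1} — and you correctly treat it as a citation while sketching a plausible reconstruction (H\"ormander/R{\aa}dstr\"om support-function embedding, Kakutani representation of the resulting $AM$-space with unit, and closedness of the range via completeness of $(ck(X),h)$), which is consistent with how such embeddings are actually built. Nothing further is required here.
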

Observe now that the
 embedding theorem  can be used in order to replace the multivalued integral above 
with a single-valued one, at least for {\em positive} integrands $f$.  
This leads to the following 
\begin{definition}\label{immersa}\rm  
Define
\mbox{$U_M:\mathcal{A}\to C(\Omega)$} as 
$U_M(E)=U(M(E))$ for all $E\in \As$. The mapping $U_M$ will be called the {\em embedded} mapping of $M$. Moreover, thanks to
{\bf \ref{LABU}.4}), the embedded mapping $U_M$ is a fuzzy submeasure if $M$ is a fuzzy multisubmeasure. 
\end{definition}

Thanks to the Theorem \ref{LABU}, it is clear that $\|U_M(E)\|_{\infty}=|M(E)|_h$ for every $E\in \mathcal{A}$, 
and so $M$ is of bounded variation if and only if $U_M$ is, as a $C(\Omega)$-valued set function.\\

Since we can consider also Gould-integrability with respect to 
 $U_M$ (according to Definition \ref{g})  for mappings $f:T\to \mathbb{R}^+_0$,
then the following result holds:
\begin{theorem}\label{singlegould}
A function $f$ is
$M$ integrable, 
with integral $J$, if and only if it is Gould-integrable with respect to $U_M$ and its integral is $j$.
Then the two integrals satisfy: $ U(J)=j$.
Finally, in these cases, $f\mathbf{1}_A$ is integrable for every  $A \in \As$.
\end{theorem}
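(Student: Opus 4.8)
The plan is to push the entire construction through the isometric embedding $U$ of Theorem~\ref{LABU}, exploiting the fact that, \emph{because $f\ge 0$}, the Gould sums on the $ck(X)$-side are carried by $U$ exactly onto the Gould sums on the $C(\Omega)$-side.

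First I would record the key identity. Fix a partition $P=\{A_i\}_{i=1}^n$ of $T$ and tags $t_i\in A_i$. Since $f(t_i)\ge 0$, the sum $\sigma_{f,M}(P)=\sum_{i=1}^n f(t_i)M(A_i)$ is a finite Minkowski combination, with nonnegative coefficients, of members of $ck(X)$, hence again an element of $ck(X)$; moreover, applying the positive homogeneity and additivity of $U$ (first property in Theorem~\ref{LABU}) finitely many times,
$$U\big(\sigma_{f,M}(P)\big)=\sum_{i=1}^n f(t_i)\,U\!\big(M(A_i)\big)=\sum_{i=1}^n f(t_i)\,U_M(A_i)=\sigma_{f,U_M}(P).$$
(Taking $\alpha=\beta=0$ in that same property also gives $U(\{0\})=0$, so $U_M(\emptyset)=0$ and $U_M$ is an admissible integrator.) This step is the only one that uses $f\ge 0$, and it is essential: for a sign-changing integrand $U$ need not be additive in the way required, so the identity would fail. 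I expect this commutation to be the only genuinely delicate point; everything afterwards is a routine transfer through an isometry.

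Next, since $U$ is an isometry between $(ck(X),h)$ and $(C(\Omega),\|\cdot\|_\infty)$ (second property in Theorem~\ref{LABU}), for any partitions $P,P'$ one has
$$h\big(\sigma_{f,M}(P),\sigma_{f,M}(P')\big)=\big\|\sigma_{f,U_M}(P)-\sigma_{f,U_M}(P')\big\|_\infty .$$
Hence the net $(\sigma_{f,M}(P))_{P\in(\Ps,\le)}$ satisfies the usual Cauchy criterion for Gould sums in the complete space $(ck(X),h)$ if and only if $(\sigma_{f,U_M}(P))_{P\in(\Ps,\le)}$ satisfies it in the complete space $C(\Omega)$; and in a complete metric space that Cauchy criterion along the directed set of partitions is equivalent to convergence of the net (extract a cofinal increasing sequence of partitions, obtain a Cauchy sequence, take its limit, and verify that the whole net converges to it). Thus $f$ is $M$-integrable iff it is $U_M$-integrable. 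If $f$ is $M$-integrable with integral $J$, continuity of $U$ gives $\sigma_{f,U_M}(P)=U(\sigma_{f,M}(P))\to U(J)$, so $j=U(J)$; conversely, if $f$ is $U_M$-integrable with integral $j$, the net $(\sigma_{f,M}(P))$ converges to some $J\in ck(X)$ (equivalently $j\in\overline{U(ck(X))}=U(ck(X))$ by the third property in Theorem~\ref{LABU}, so $j=U(J)$ for a unique $J$, and one transports back by the isometry), and then $U(J)=\lim_P U(\sigma_{f,M}(P))=\lim_P\sigma_{f,U_M}(P)=j$. In both directions $U(J)=j$, as claimed.

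Finally, the localisation statement follows by combining the above with Proposition~\ref{gouldsotto}. Indeed $U_M\colon\As\to C(\Omega)$ is an arbitrary $C(\Omega)$-valued mapping, so once $f$ is $U_M$-Gould-integrable (by the equivalence just proved), Proposition~\ref{gouldsotto} yields that $f\mathbf 1_A$ is $U_M$-Gould-integrable for every $A\in\As$; since $f\mathbf 1_A\ge 0$, the equivalence of the present theorem, applied now to the nonnegative function $f\mathbf 1_A$, gives that $f\mathbf 1_A$ is $M$-integrable. Note there is no circularity, because the equivalence was established for \emph{every} nonnegative integrand before this last step is invoked.
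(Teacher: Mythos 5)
Your proof is correct and follows essentially the same route as the paper's: transfer the Riemann sums through the isometric embedding $U$, use the Cauchy criterion together with completeness of $(ck(X),h)$ and of $C(\Omega)$ (plus closedness of $U(ck(X))$) to pass the limit back and forth, and obtain integrability of $f\mathbf{1}_A$ via the restriction/extension argument of Proposition~\ref{gouldsotto}. The only difference is that you make explicit the commutation identity $U(\sigma_{f,M}(P))=\sigma_{f,U_M}(P)$ and its reliance on $f\ge 0$, which the paper's proof uses tacitly.
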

\begin{proof}  First, 
assume that $f$ is  Gould-integrable with respect to  $U_M$, and 
denote by $j$ its integral. This means that the filtering net $(U(\sigma_M(f,P))_P$ is convergent to $j$. 
Hence it is Cauchy in $C(\Omega)$. Then, also the net $(\sigma_M(f,P))_P$ is Cauchy in $ck(X)$: by 
completeness of this space, 
 $(\sigma_M(f,P))_P$ has limit $J$ in $ck(X)$. By continuity of $U$, it is then clear that $U(J)=j$. \\
A similar argument can be used to prove the converse implication. So to conclude the proof it only remains to 
deduce integrability of $f$ in every subset $A\in \As$, and this is a consequence of integrability of $f$ with 
respect to $U(M)$: indeed, fixing any subset $A\in \As$ and any positive $\varepsilon$ in $\mathbb{R}$, a partition $P$ exists,
 finer than $\{A, T \setminus A\}$, such that
$\|\sigma_{U(M)}(f,P')-\sigma_{U(M)}(f,P'')\|_{\infty}\leq \varepsilon$
holds, for all partitions $P'$ and $P''$ finer than $P$. So,
choosing two partitions of $A$, say  $\Pi'_A$ and $\Pi''_A$, both finer than $P_A$ (i.e. $P$ restricted to $A$), and 
 {\em extending} them to $A^c$ with a unique partition finer than $P_{A^c}$, then two partitions, $P'$ and 
$P''$, can be found, both finer than $P$, and coincident in the set $A^c$: these partitions satisfy
$\varepsilon>\|\sigma_{U(M)}(f,P') -\sigma_{U(M)}(f,P'')\|_{\infty}=
\|\sigma_{U(M)}(f,\Pi'_A) -\sigma_{U(M)}(f,\Pi''_A)\|_{\infty}.$
By the completeness of $C(\Omega)$, this is enough to deduce integrability of $f\mathbf{1}_A$.
\end{proof} \\

Following Definition \ref{minteg}, 
 a multisubmeasure $M:\mathcal{A}\to ck(X)$,
it is said to be {\em integrable} if the  function $f(x)\equiv \mathbf{1}_A$ is 
$M$ integrable
for every $A\in \As$. Then the notation
$$M_0(A):=\int_T \mathbf{1}_A dM:=\int_A \mathbf{1} \,dM,$$ 
is used, for $A\in \mathcal{A}$.
This means that, for every element $A\in \mathcal{A}$ there exists an element $M_0(A)\in ck(X)$ such that,
 for every $\varepsilon>0$ a partition $P\in \Ps$ can be found with the property that
$h(\sum_{I\in P'}M(I\cap A),M_0(A))\leq \varepsilon$
holds, as soon as $P'$ is a finer partition than $P$.

The following theorem states a necessary and sufficient condition for the integrability of a $ck(X)$-valued 
fuzzy multisubmeasure of
bounded variation. The technique takes into account  previous results and it is inspired by the notions of 
\cite[Definition 3.4]{bcs2014}
and \cite[Definition 3.13]{bms}. This equivalence could be also useful in order to study differential inclusions.
\begin{theorem}\label{integrabilita}
Let $M$ be a fuzzy multisubmeasure of bounded variation. 
If $M$ is $ck(X)$-valued, then $M$ is integrable if and only if there exists a convex  compact set $K$ such
 that $\sigma(1,P):=\sigma_M (1,P) \subset K$
 for all partitions $P$. When this is the case, then
$$\int_T M =cl\left(\bigcup\{\sigma(1,P): P\in \mathcal{P}(T)\} \right).$$
\end{theorem}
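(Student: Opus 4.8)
The statement is an analogue, in the multivalued setting, of the (commented-out) Theorem \ref{subadditive}, and the natural strategy is to transport everything to $C(\Omega)$ via the embedding $U$ of Theorem \ref{LABU} and Theorem \ref{singlegould}, then exploit monotonicity of the Riemann sums. First I would observe that, since $M$ is a multisubmeasure, for every pair of partitions $P\le P'$ one has $\sigma(1,P)\subset \sigma(1,P')$: indeed refining one block $A_i$ into $\bigcup_j (A_i\cap B_j)$ replaces $M(A_i)$ by $\sum_j M(A_i\cap B_j)$, and property (\ref{msm}.iii) gives $M(A_i)\subset \sum_j M(A_i\cap B_j)$; adding the (unchanged) remaining terms and using that Minkowski addition is monotone with respect to inclusion yields $\sigma(1,P)\subset\sigma(1,P')$. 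Equivalently, applying $U$ and using \ref{LABU}.1) and the fact that $U$ preserves the order (inclusion corresponds to pointwise $\le$ on $U(ck(X))$, as follows from \ref{LABU}.4)), the net $\big(U(\sigma(1,P))\big)_P$ is \emph{increasing} in $C(\Omega)$.

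\textbf{The necessity direction} is immediate: if $M$ is integrable with integral $\int_T M = J_0$, then $J_0\in ck(X)$ is compact and, since $\sigma(1,P)\subset \sigma(1,P')$ whenever $P'$ refines $P$ and the net converges to $J_0$ in Hausdorff distance, one gets $\sigma(1,P)\subset J_0$ for every $P$ (if some point $x\in\sigma(1,P)$ had $d(x,J_0)=\delta>0$, then for every refinement $P'\ge P$ one would have $e(\sigma(1,P'),J_0)\ge d(x,J_0)=\delta$, contradicting convergence). So $K:=J_0$ works.

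\textbf{For the sufficiency direction}, assume such a compact convex $K$ exists with $\sigma(1,P)\subset K$ for all $P$. Set $J:=cl\big(\bigcup_P \sigma(1,P)\big)$; since $\bigcup_P\sigma(1,P)\subset K$ and $K$ is compact, $J$ is a nonempty closed subset of the compact set $K$, hence compact, and it is convex because it is the closure of an upward-directed (hence convex-increasing) union of convex sets — directedness here is exactly the inclusion $\sigma(1,P_1),\sigma(1,P_2)\subset \sigma(1,P_1\vee P_2)$ established above. It remains to show $h(\sigma(1,P),J)\to 0$ along the net. Fix $\varepsilon>0$. Because $J\subset K$ with $K$ compact, cover $J$ by finitely many balls $B(x_1,\varepsilon/2),\dots,B(x_r,\varepsilon/2)$ with $x_k\in J$; each $x_k$ is a limit of points in $\bigcup_P\sigma(1,P)$, so there are partitions $P^{(1)},\dots,P^{(r)}$ with $d(x_k,\sigma(1,P^{(k)}))<\varepsilon/2$; let $P_\varepsilon:=P^{(1)}\vee\cdots\vee P^{(r)}$. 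Then for any $P'\ge P_\varepsilon$ one has $\sigma(1,P^{(k)})\subset\sigma(1,P')$ for all $k$, hence $d(x_k,\sigma(1,P'))<\varepsilon/2$, hence $e(J,\sigma(1,P'))\le \varepsilon/2+\varepsilon/2=\varepsilon$ (every point of $J$ is within $\varepsilon/2$ of some $x_k$), while $e(\sigma(1,P'),J)=0$ because $\sigma(1,P')\subset J$ by definition. Therefore $h(\sigma(1,P'),J)\le\varepsilon$ for all $P'\ge P_\varepsilon$, which is precisely convergence of the net to $J$; so $M$ is integrable and $\int_T M=J$, as claimed.

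\textbf{Main obstacle.} The only genuinely delicate point is the last compactness/covering argument: the index set is a net, not a sequence, so one cannot simply extract a convergent subsequence of the $\sigma(1,P)$'s. Using the total boundedness of $J$ (inherited from $K$) to reduce to finitely many target points $x_k$ and then taking the common refinement $P^{(1)}\vee\cdots\vee P^{(r)}$ is what makes the net argument go through; this is the multivalued counterpart of the order-continuity-of-the-norm step in the single-valued Theorem \ref{subadditive}, and one could alternatively run it through the embedding $U$ invoking Proposition \ref{riparazione} applied to a cofinal increasing sequence $(\sigma(1,P_n))_n$ with $cl(\bigcup_n\sigma(1,P_n))=J$, after checking such a cofinal sequence exists (it does, by separability of $K$). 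Everything else — monotonicity of the sums, compactness and convexity of $J$, the necessity direction — is routine once the inclusion $\sigma(1,P)\subset\sigma(1,P')$ for $P'\ge P$ is in hand.
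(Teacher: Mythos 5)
Your proof is correct, and for the sufficiency direction it takes a genuinely different (and more direct) route than the paper. The paper first proves mere \emph{existence} of the integral by a Cauchy argument run by contradiction: if the net $P\mapsto\sigma(1,P)$ were not Cauchy one could extract an increasing sequence of partitions with $h(\sigma(1,P_n),\sigma(1,P_{n+1}))\geq\vp$, which contradicts Proposition \ref{riparazione} (convergence of increasing sequences of compact convex sets dominated by a compact set); only afterwards, in a separate step, does it identify the limit as $cl\bigl(\bigcup_P\sigma(1,P)\bigr)$ via $\vp B_X$-enlargements. You instead name the candidate limit $J=cl\bigl(\bigcup_P\sigma(1,P)\bigr)$ from the outset and prove convergence of the net to $J$ directly, using total boundedness of $J$ to reduce to finitely many centers $x_k$ and the common refinement $P^{(1)}\vee\cdots\vee P^{(r)}$ to make all of them $\vp$-close to $\sigma(1,P')$ simultaneously; the reverse excess vanishes since $\sigma(1,P')\subset J$. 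This bypasses Proposition \ref{riparazione} entirely, avoids the detour through sequences, and delivers the closed-union formula for $\int_T M$ as part of the same argument rather than as an afterthought. Your necessity direction is the same in substance as the paper's (monotonicity of the sums under refinement plus convergence forces $\sigma(1,P)\subset\int_T M$), differing only in that you argue pointwise with distances where the paper uses the inclusion $\sigma(1,P')\subset\int_T M+\vp B_X$. Both hinge on the key inclusion $\sigma(1,P)\subset\sigma(1,P')$ for $P\leq P'$, which you justify carefully from \textbf{(\ref{msm}.iii)} and monotonicity of Minkowski addition; the paper states this only as ``the sums are a filtering family.'' No gaps.
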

\begin{proof} 
Sufficiency:  first of all, thanks to bounded variation, 
all the sums $\sigma(1,P):=\sum_{I\in P}M (I)$ are compact convex sets contained in $K$ for all partitions $P$.
 Moreover, thanks to subadditivity, the sums above are a filtering family in $ck(X)$.   In order to prove the 
existence of the integral, it is enough to show that the map $P\mapsto \sigma(1,P)$ is Cauchy, since $ck(X)$
is a complete space with respect to the Hausdorff distance.  Assume by contradiction that the Cauchy 
property does not hold: then there exists a positive number $\vp$ such that, as soon as $P$ is any partition of $T$,
 a couple $(P',P'')$ of finer partitions exists, satisfying $h(\sigma(1,P'),\sigma(1,P''))\geq \vp$.
 Since the refinement order is filtering, there exists a sequence $(P_n)_n$ of partitions, increasing in
 the refinement order, and such that $h(\sigma(1,P_n),\sigma(1,P_{n+1}))\geq \vp$ for all $n$. 
Now, since $\sigma(1,P_n)$ is an increasing sequence of elements of $ck(X)$, 
Proposition \ref{riparazione} applies, 
and the limit $\lim_n\sigma(1,P_n)$ exists, with respect to the Hausdorff distance: but this contradicts
 the fact that $h(\sigma(1,P_n),\sigma(1,P_{n+1}))\geq \vp$ for all $n$. So this part of the theorem is proved.
\\
Necessity: choose any partition $P$ of $T$, $P=\{E_1,...,E_k\}$, and fix arbitrarily $\vp>0$. 
By integrability, there exists a partition $P_{\vp}$ such that, for every finer partition $P'$ it holds
$h\left(\sigma(1,P'),\int_T M \right)\leq \vp,$
from which
$\sigma(1,P')\subset \int_T M+\vp B_X.$
Therefore, choosing $P':=P_{\vp}\vee P,$ and thanks to subadditivity of $M$, it follows
$\sigma(1,P)\subset \sigma(1,P')\subset  \int_T M+\vp B_X.$
By the arbitrariness of $P$, one gets
$$\bigcup_{P\in\mathcal {P}(T)}  \sigma(1,P)\subset \int_T M+\vp B_X,$$
where $P$ ranges over all the possible partitions of $T$. By the  arbitrariness of $\vp>0$ and 
compactness of $\int_T M$, it is obvious that
$\bigcup_{P\in\mathcal {P}(T)}  \sigma(1,P)\subset \int_T M,$
and so the necessity is proven.
Observe that, since $\int_T M$ is closed, the inclusion
$$cl\left(\bigcup_{P\in\mathcal {P}(T)}  \sigma(1,P) \right)\subset \int_T M$$
follows from the last formula. \\
In order to finish the proof, it only remains to prove the reverse inclusion, assuming that $M$ is Gould integrable.
 To this aim, fix $\vp>0$ and any partition $P$ of $T$ such that
$\int_T M\subset \sigma(1,P')+\vp B_X$
holds, for all partitions $P'$ finer than $P$. Then
$$\int_T M \subset cl\left(\bigcup_{P\in\mathcal {P}(T)}\sigma(1,P) \right)+ \vp B_X.$$
Then, by the arbitrariness of $\vp$, the desired conclusion follows.
\end{proof} \\
Now, assuming that $M$ is integrable,  the additivity  of  its {\em integral mapping} $M_0$ will be proven 
together with the equivalence between  the integrability of positive functions $f$  with respect to $M$ 
and  with respect to $M_0$, at least when $f$ is bounded. So, at least for bounded positive mappings $f$,
 integrability with respect to $M$ is equivalent to integrability with respect to an  additive multimeasure (with the same integral).\\

\begin{remark}\label{M-UM} \rm
Observe that, 
since the embedding theorem applies,  $M$ can be always identified with $U_M$, 
so that  $M$   can be viewed as 
a single-valued mapping, taking values in $C(\Omega)$ and so all the results 
concerning Gould integrability in Section \ref{tot+G} can be applied to $M$. 
\end{remark}
\begin{description}
\item[$H_0$)] From now on the fuzzy multisubmeasure $M: \As \to ck(X)$  will  always be assumed  of bounded variation and satisfying 
the conditions in Theorem \ref{integrabilita}, namely 
there exists a convex  compact set $K$ such that $\sigma(1,P)\subset K$
 for all partitions $P$,
so that $M$ is integrable in $ck(X)$. 
\end{description}
First of all, observe that, in those conditions, $\int_T \mathbf{1}_A\, dM$ exists, for every  set $A\in \As$,
 essentially with the same proof of Theorem \ref{integrabilita}. Next, 

\begin{proposition}\label{additivita}
The function $M_0$, defined in $\As$ as
$M_0(A)=\int_T \mathbf{1}_A\, dM$, is additive.
\end{proposition}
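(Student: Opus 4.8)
The plan is to mimic the proof of Proposition \ref{additivita0} almost verbatim, replacing the Banach-space norm $\|\cdot\|_\infty$ by the Hausdorff distance $h$ on $ck(X)$ and the linear operations by Minkowski addition. Since $M$ is assumed integrable of bounded variation and satisfying the conditions of Theorem \ref{integrabilita}, the integrals $\int_A dM$, $\int_B dM$ and $\int_{A\cup B} dM$ all exist in $ck(X)$ for every pair of disjoint $A,B\in\As$; call them $M_0(A)$, $M_0(B)$ and $M_0(A\cup B)$.

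First I would fix disjoint $A,B\in\As$ and $\varepsilon>0$, and invoke the integrability of $M$ on $A\cup B$, on $A$ and on $B$ to produce partitions $P_1$ of $A\cup B$, $P_A$ of $A$ and $P_B$ of $B$ such that $h(\sigma(1,P'),M_0(A\cup B))\leq\varepsilon$ for every $P'$ finer than $P_1$, and similarly for $A$ and $B$. Then I would take a common refinement $P$ of $A\cup B$ which is finer than $P_1$ and whose restrictions to $A$ and to $B$ are finer than $P_A$ and $P_B$ respectively (this is legitimate because $P_A\cup P_B$ is itself a partition of $A\cup B$, so one can form $P_1\vee(P_A\cup P_B)$). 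Writing $\sigma(1,P)=\sigma(1,P|_A)+\sigma(1,P|_B)$ by additivity of the Minkowski sum over the disjoint pieces of the partition, and using the standard fact that $h(A_1+A_2,B_1+B_2)\leq h(A_1,B_1)+h(A_2,B_2)$ for the Hausdorff distance, I would obtain
\[
h\bigl(M_0(A\cup B),\,M_0(A)+M_0(B)\bigr)\leq h\bigl(M_0(A\cup B),\sigma(1,P)\bigr)+h\bigl(\sigma(1,P|_A),M_0(A)\bigr)+h\bigl(\sigma(1,P|_B),M_0(B)\bigr)\leq 3\varepsilon,
\]
and conclude by arbitrariness of $\varepsilon$ that $M_0(A\cup B)=M_0(A)+M_0(B)$.

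Alternatively, and perhaps even more cleanly, I could avoid re-running the argument by appealing to the embedding theorem: under the standing identification of $M$ with $U_M$, the mapping $M$ is a single-valued $C(\Omega)$-valued integrable mapping in the sense of Definition \ref{minteg}, so Proposition \ref{additivita0} applies directly to give that $\lambda_{U_M}=U_{M_0}$ is additive in $C(\Omega)$; since $U$ is an isometry satisfying $U(A+B)=U(A)+U(B)$ (Theorem \ref{LABU}.1) and is injective, additivity of $U\circ M_0$ pulls back to additivity of $M_0$ in $ck(X)$. The only mild obstacle is bookkeeping: one must be careful that $U$ respects Minkowski sums only for \emph{nonnegative} scalar coefficients, which is exactly the case here, and that the conclusion $U(M_0(A\cup B))=U(M_0(A))+U(M_0(B))=U(M_0(A)+M_0(B))$ combined with injectivity of $U$ yields the claim — there is no genuine difficulty beyond this. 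I would present the first, self-contained argument as the main proof, since it parallels Proposition \ref{additivita0} and keeps the section uniform.
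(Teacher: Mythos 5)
Your main argument is correct and is essentially the paper's own proof: the authors explicitly omit the proof of Proposition \ref{additivita}, stating it is "quite similar to that of Proposition \ref{additivita0}", and the intended argument is exactly your triangle-inequality computation with the Hausdorff distance (using $h(A_1+A_2,B_1+B_2)\leq h(A_1,B_1)+h(A_2,B_2)$ and the fact that $h$ is a genuine metric on $ck(X)$). Your alternative route via the embedding $U$ is also consistent with the paper's standing identification of $M$ with $U_M$, but adds nothing beyond the direct argument.
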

\begin{proof}
The result is a consequence  of Proposition \ref{additivita0}, and Theorem \ref{singlegould}.
\end{proof} \\

Now, notice that, under the above conditions, the $C(\Omega)$-valued measure $U_{M_0} - U_M$
 (which is non-negative, of course) has null integral. This almost immediately implies the following result.
\begin{theorem}\label{replace}
Let 
$f:T\to \mathbb{R}_0^+$ any bounded mapping. Then $f$ is $M$-integrable if and only if it is 
$M_0$-integrable, and the integrals coincide.
\end{theorem}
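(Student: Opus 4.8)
The plan is to reduce the assertion to the single-valued Corollary \ref{4.9} via the embedding $U$ of Theorem \ref{LABU}. Under the standing assumptions, $M$ is identified with its embedded mapping $U_M:\As\to C(\Omega)$, which has bounded variation and, by Theorem \ref{integrabilita}, is Gould-integrable in the sense of Definition \ref{minteg}; hence its integral function $\lambda_{U_M}$ is a well-defined finitely additive $C(\Omega)$-valued measure. The first step is to verify that $\lambda_{U_M}=U_{M_0}$, that is, that the integral function of $U_M$ coincides with the embedded image of the integral mapping $M_0$ of $M$. This is a bookkeeping argument: for each $A\in\As$, Theorem \ref{singlegould} applied to the positive integrand $\mathbf{1}_A$ on $T$ gives $U\!\left(\int_T\mathbf{1}_A\,dM\right)=\int_T\mathbf{1}_A\,dU_M$; the left-hand side is $U_{M_0}(A)$ by the very definition of $M_0$, and the right-hand side is $\lambda_{U_M}(A)$ by the definition of the integral function, the additivity of both sides being already recorded in Propositions \ref{additivita} and \ref{additivita0}.

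Once this identification is available, the core of the proof is immediate: since $f$ is bounded, Corollary \ref{4.9}, applied with $m=U_M$ (whose integral function is $\lambda_{U_M}=U_{M_0}$), shows that $f$ is Gould-integrable with respect to $U_M$ if and only if it is Gould-integrable with respect to $U_{M_0}$, and that when this occurs
\[
\int_T f\,dU_M=\int_T f\,dU_{M_0}.
\]

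It remains to transfer this equivalence back to the multivalued setting, and here the hypothesis $f\ge 0$ is essential, because the isometry $U$ of Theorem \ref{LABU} only linearizes Minkowski combinations with non-negative scalars. Applying Theorem \ref{singlegould} twice — to the pair $(M,U_M)$ and to the pair $(M_0,U_{M_0})$, the latter being legitimate because $M_0$ is again an additive $ck(X)$-valued multisubmeasure of bounded variation — one obtains that $f$ is $M$-integrable iff $f$ is $U_M$-integrable, that $f$ is $M_0$-integrable iff $f$ is $U_{M_0}$-integrable, and that in these cases $U\!\left(\int_T f\,dM\right)=\int_T f\,dU_M$ and $U\!\left(\int_T f\,dM_0\right)=\int_T f\,dU_{M_0}$. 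Combining this with the previous paragraph yields that $f$ is $M$-integrable iff it is $M_0$-integrable, and, when this holds, $U\!\left(\int_T f\,dM\right)=U\!\left(\int_T f\,dM_0\right)$, whence $\int_T f\,dM=\int_T f\,dM_0$ since $U$ is injective.

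The only point that genuinely requires attention is the identification $\lambda_{U_M}=U_{M_0}$ of the first step; everything else is a formal consequence of Corollary \ref{4.9} and Theorem \ref{singlegould}. If one prefers to avoid invoking Corollary \ref{4.9}, one can reproduce its proof directly: the measure $U_{M_0}-U_M\ge 0$ has null integral, so Corollary \ref{henstoc2} with $g\equiv 1$ provides, for each $\varepsilon>0$, a partition $\Pi$ with $\sum_{E\in\Pi}|U_M(E)-U_{M_0}(E)|\le\varepsilon u$; then, with $K$ a bound for $f$, one estimates $\bigl|\sum_I f(t_I)U_{M_0}(I)-\sum_I f(t_I)U_M(I)\bigr|\le K\sum_I|U_M(I)-U_{M_0}(I)|\le K\varepsilon u$ along any finer partition, which is exactly the inequality appearing in the proof of Corollary \ref{4.9}.
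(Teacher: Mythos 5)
Your proof is correct and follows essentially the same route as the paper: embed via $U$, apply Corollary \ref{4.9} to $U_M$ and its integral function, and transfer back with Theorem \ref{singlegould}. The only difference is that you make explicit the identification $\lambda_{U_M}=U_{M_0}$, which the paper takes for granted (it only remarks beforehand that $U_{M_0}-U_M$ has null integral); this is a welcome clarification rather than a deviation.
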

\begin{proof}
Assume that $f$ is integrable with respect to $M$. Then it is 
$U_M$-integrable, and 
$U\left(\int_T f dM \right)=\int_Tf dU_M.$
Now, it is sufficient to apply Corollary \ref{4.9} to deduce 
that $f$ is integrable also with respect to $U_{M_0}$ with the same integral, so
$U\left(\int_T f dM \right)=\int_T f dU_M=\int_T f dU_{M_0},$
which also shows that $\int_Tf dU_{M_0}$ is in the range of $U$. Thanks to Theorem \ref{singlegould}
 this is enough to conclude that $f$ is integrable with respect to $M_0$ and the two integrals agree.
A reverse argument also shows  the opposite implication, so the proof is complete.
\end{proof} \\

Also integrability on measurable subsets can be deduced, in the usual manner.
\begin{proposition}\label{questa}
Let
$f:T\to \mathbb{R}_0^+$ be any Gould-integrable mapping with respect to $M$. 
Then, if $A$ is any fixed element of $\As$, the mapping $f \mathbf{1}_A$ is integrable too.
\end{proposition}
\begin{proof} It is analogous to Proposition \ref{gouldsotto}.
\end{proof} \\

Moreover
\begin{proposition}\label{variazioneintegrale}
$M, M_0$ have the same variation measure. 
\end{proposition}
\begin{proof}
Fix arbitrarily $A\in \mathcal{A}$, and denote by $\mathcal{P}(A)$ the family of all finite partitions of $A$.
 Then, thanks to Theorem \ref{integrabilita},
$M_0(A)=cl(\bigcup\{\sum_i M(B_i) : (B_i)_i\in \mathcal{P}(A)\}).$
In particular, $M(A)\subset M_0(A)$, and, for any partition $(B_i)_i$ in $\mathcal{P}(A)$, one has
$\sum_i|M(B_i)|_h \leq \sum_i |M_0(B_i)|_h \leq v_{M_0}(A).$
This clearly implies that $v_M(A)\leq v_{M_0}(A)$.\\
 Conversely, fix any $\varepsilon>0$ and $A\in \mathcal{A}$. For every partition 
$P\equiv(B_i)_{i=1}^N \in \mathcal{P}(A)$, and every index $i$,
since $M$ is integrable, 
there exists a partition $P_i\equiv(B'_{i,j})_j \in \mathcal{P}(B_i)$ such that 
$|M_0(B_i)|_h \leq |\sum_j M(B'_{i,j}) |_h +\vp/N \leq \sum_j |M(B'_{i,j})|_h+\vp/N$
and so
$\sum_i |M_0(B_i)|_h \leq \sum_i \sum_j |M(B'_{i,j})|_h +\vp \leq v_M(A)+\vp.$
By the arbitrariness of $P\in \mathcal{P}(A)$ and of $\vp>0$, it follows 
$v_{M_0}(A)\leq v_M(A).$
 \end{proof} \\

In the sequel, 
assume that $\As$ is a $\sigma$-algebra.
Also let  $\Gamma:\mathcal{A}\to ck(X)$ be any fixed fuzzy multimeasure.
Following \cite{ref11,ms94b}
for $\alpha >0$ and $E \in \mathcal{A}$, let $A_{\Gamma} (E, \alpha) $ be the
 {\em $\alpha$-approximate range} defined by:
$A_{\Gamma} (E, \alpha) = \{ r\in [0,+\infty):  h(\Gamma(H), r M(H))\leq \alpha v_M(H), \forall H \in 
\mathcal{A} \cap E\}.$
\begin{remark}\label{rango-approx}\rm
Observe that, 
by Theorem \ref{LABU}, using the  embedding $U$, it is possible to formulate the $\alpha$-approximate
 range in the following way:
\begin{eqnarray*}
A_{\Gamma} (E, \alpha) &=& \left\{ r\in [0,+\infty):  
\| U_{\Gamma} (H) - r U_M (H) \|_{\infty} \leq \alpha \overline{m} (H), \forall H \in 
\mathcal{A} \cap E \right\},
\end{eqnarray*}
where $m:=U_M$. In fact $h(\Gamma(H), r M(H)) = \| U_{\Gamma} (H) - r U_M (H) \|_{\infty}$ and
$$\overline{m} (H) =
 \sup \sum_i \| U_M(E_i)\|_{\infty} = \sup \sum_i |M(E_i)|_h = v_M (H). $$
\end{remark}

\begin{theorem}{\rm (\cite[Lemma 3.3]{ms94b})} 
 Let 
$\Gamma \ll v_M$ (i.e. $\forall
\varepsilon>0$, $\exists \,\, \delta (\varepsilon)=\delta>0$ such that for every $E \in \mathcal{A}$ 
with $v_M (E)<\delta,$ it follows $v_{\Gamma}(E)<\varepsilon$).
Then, for every $\alpha >0$, the property $"A_{\Gamma}(E,\alpha)\neq \emptyset"$  is $v_M$-null difference.
\end{theorem}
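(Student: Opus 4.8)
The plan is to show that the set property $P_E := $ ``$A_\Gamma(E,\alpha)\neq\emptyset$'' is $v_M$-null difference, i.e.\ that whenever $A,B\in\mathcal{A}$ with $v_M(A)>0$, $v_M(B)>0$ and $v_M(A\triangle B)=0$, either both $A$ and $B$ have $P$ or neither does. By symmetry it suffices to assume $P_A$ holds and deduce $P_B$. So fix $r\in A_\Gamma(A,\alpha)$; the goal is to show the same $r$ (or possibly $r$ up to an adjustment, but I expect the same $r$ works) belongs to $A_\Gamma(B,\alpha)$, that is, using the embedded formulation in Remark~\ref{rango-approx}, $\|U_\Gamma(H)-rU_M(H)\|_\infty\leq \alpha\,\overline{m}(H)$ for every $H\in\mathcal{A}\cap B$, where $m=U_M$ and $\overline{m}=v_M$.

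The key observation is that $v_M(A\triangle B)=0$ forces $M$, $\Gamma$, and hence $U_M$, $U_\Gamma$ to be insensitive, up to norm, to symmetric-difference-null modifications. Concretely: since $v_M$ is monotone and subadditive (being the variation of a subadditive multifunction, cf.\ Remark~\ref{rm1} applied to $\|U_M\|$), $v_M(A\triangle B)=0$ implies $v_M(C)=0$ for every $C\in\mathcal{A}$ with $C\subset A\triangle B$. For such $C$ one has $|M(C)|_h\leq v_M(C)=0$, so $M(C)=\{0\}$, hence $U_M(C)=0$; and since $\Gamma\ll v_M$, also $v_\Gamma(C)=0$, whence $\Gamma(C)=\{0\}$ and $U_\Gamma(C)=0$. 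Now take any $H\in\mathcal{A}\cap B$. Write $H=(H\cap A)\cup(H\setminus A)$ with $H\setminus A\subset B\setminus A\subset A\triangle B$, and also $H\cup A = A\cup(H\setminus A)$ with $A\triangle(H\cap A)\subset A\triangle B$. Using additivity of $\Gamma$ (it is a multimeasure) together with the vanishing established above, one gets $U_\Gamma(H)=U_\Gamma(H\cap A)$; similarly, from subadditivity/monotonicity of $M$ one controls $U_M(H)$ by $U_M(H\cap A)$ up to terms whose norm is bounded by $v_M$ of a $v_M$-null set, hence exactly $U_M(H)=U_M(H\cap A)$ as well, and likewise $v_M(H)=v_M(H\cap A)$. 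Since $H\cap A\in\mathcal{A}\cap A$ and $r\in A_\Gamma(A,\alpha)$, we have $\|U_\Gamma(H\cap A)-rU_M(H\cap A)\|_\infty\leq\alpha\,v_M(H\cap A)$, and substituting the three identities gives $\|U_\Gamma(H)-rU_M(H)\|_\infty\leq\alpha\,v_M(H)$, i.e.\ $r\in A_\Gamma(B,\alpha)$, so $P_B$ holds.

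The main obstacle is the bookkeeping in the previous step: one must be careful that $M$ is only \emph{sub}additive (not additive), so passing from $M(H)$ to $M(H\cap A)$ is not literally an equality of the set functions but must be argued through the variation. The clean way is to phrase everything in terms of $\overline{m}$: from $v_M$-null difference of the scalar submeasure $\overline{m}$ (Remark~\ref{rm1}: $\overline{m}$ is additive when $m$ is subadditive, hence certainly $v_M$-null sets behave additively) and from $\|U_M(H)-U_M(H\cap A)\|_\infty\leq \overline{m}(H\triangle(H\cap A))=\overline{m}(H\setminus A)=0$, together with the analogous estimate for $U_\Gamma$ via $\Gamma\ll v_M$, all the needed identities follow. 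A secondary point worth double-checking is that $A_\Gamma(A,\alpha)\neq\emptyset$ is genuinely being used only to extract one witness $r$, and that the argument is perfectly symmetric in $A$ and $B$, so ``$P_A\iff P_B$'' is obtained, which is exactly the $v_M$-null difference property.
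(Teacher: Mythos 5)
The paper itself offers no proof of this statement: it is quoted, with attribution, from \cite[Lemma 3.3]{ms94b}, so there is nothing internal to compare against. Your argument is correct and is the natural one. For $H\in\mathcal{A}\cap B$ you split $H=(H\cap A)\cup(H\setminus A)$, observe $H\setminus A\subset A\triangle B$ so that $v_M(H\setminus A)=0$ by monotonicity of the variation, deduce $M(H\setminus A)=\{0\}$ and, via $\Gamma\ll v_M$, $\Gamma(H\setminus A)=\{0\}$, and then conclude $\Gamma(H)=\Gamma(H\cap A)$ (additivity of the multimeasure), $M(H)=M(H\cap A)$, and $v_M(H)=v_M(H\cap A)$; hence any witness $r\in A_\Gamma(A,\alpha)$ satisfies $h(\Gamma(H),rM(H))=h(\Gamma(H\cap A),rM(H\cap A))\leq\alpha v_M(H\cap A)=\alpha v_M(H)$, so $r\in A_\Gamma(B,\alpha)$, and symmetry gives the null-difference property. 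Two small points. First, your worry that ``$M(H)=M(H\cap A)$ is not literally an equality because $M$ is only subadditive'' dissolves once you use that a multisubmeasure is also monotone: $M(H\cap A)\subset M(H)\subset M(H\cap A)+M(H\setminus A)=M(H\cap A)+\{0\}$, so the equality is exact and no detour through $\overline{m}$ or the embedding $U$ is needed. Second, the parenthetical claim ``$A\triangle(H\cap A)\subset A\triangle B$'' is false in general (e.g.\ when $H\cap A=\emptyset$ it equals $A$), but since it plays no role in the chain of estimates it should simply be deleted rather than repaired.
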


\begin{theorem}
 Let $P$ be a $v_M$-null difference property such that $P$ is $v_M$-exhaustive on $T$. 
Then there exists  a $v_M$-exhaustion of $T$, 
$(B_{i})_{i}$, such that every $B_{i}$ has $P$ and $T = \bigcup\limits_{i}B_{i}$.
\end{theorem}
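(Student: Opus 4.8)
The plan is to start from the given $v_M$-exhaustiveness of $P$ on $T$: there exists a $v_M$-exhaustion $(E_i)_{i\in I}$ of $T$ such that each $E_i$ has $P$. By definition this means the $E_i$ are pairwise disjoint, $v_M(E_i)>0$ for all $i$, and for every $\varepsilon>0$ only finitely many of them are needed to cover $T$ up to $v_M$-measure $<\varepsilon$. The only thing that may fail for the desired conclusion is that $\bigcup_i E_i$ need not equal $T$; there is a ``leftover'' set $R:=T\setminus\bigcup_i E_i\in\mathcal{A}$. The whole point is to absorb $R$ into one of the pieces without destroying property $P$, using the $v_M$-null difference hypothesis.

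First I would observe that $v_M(R)=0$. Indeed, fix $\varepsilon>0$; by the exhaustion property there is $n_0$ with $v_M\big(T\setminus\bigcup_{i=1}^{n_0}E_i\big)<\varepsilon$, and since $R\subseteq T\setminus\bigcup_{i=1}^{n_0}E_i$, monotonicity of $v_M$ gives $v_M(R)<\varepsilon$; as $\varepsilon$ was arbitrary, $v_M(R)=0$. Next I would enlarge the first exhaustion set: put $B_1:=E_1\cup R$ and $B_i:=E_i$ for $i\ge 2$ (relabelling if $I$ is finite; the argument is the same). The new family $(B_i)_i$ is still pairwise disjoint, still satisfies $v_M(B_i)>0$ for all $i$ (for $i\ge2$ trivially, and $v_M(B_1)\ge v_M(E_1)>0$ by monotonicity), and $\bigcup_i B_i=\bigcup_i E_i\cup R=T$. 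The exhaustion estimate is unchanged for tails not involving the first index, and for tails involving it we only added a $v_M$-null set, so $(B_i)_i$ is again a $v_M$-exhaustion of $T$, now with union exactly $T$.

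It remains to check that every $B_i$ still has $P$. For $i\ge2$ this is immediate since $B_i=E_i$. For $B_1$, note $B_1\triangle E_1=R\setminus E_1\subseteq R$, so $v_M(B_1\triangle E_1)\le v_M(R)=0$; moreover $v_M(B_1)>0$ and $v_M(E_1)>0$. Since $P$ is $v_M$-null difference and $E_1$ has $P$, it follows that $B_1$ has $P$ as well. Thus $(B_i)_i$ is the required exhaustion. The main point to be careful about is precisely this last step: one must verify that the enlarged piece still has strictly positive $v_M$-measure, so that the null-difference property is actually applicable to the pair $(B_1,E_1)$; this is where monotonicity of $v_M$ is used, and it is the only subtlety in an otherwise routine argument.
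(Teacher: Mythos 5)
Your proposal is correct and follows essentially the same route as the paper: both arguments set the leftover set $R=T\setminus\bigcup_i E_i$, show $v_M(R)=0$ from the exhaustion estimate, absorb $R$ into the first piece $B_1=E_1\cup R$, and invoke the $v_M$-null difference hypothesis via $v_M(B_1\triangle E_1)\le v_M(R)=0$ to transfer $P$ to $B_1$. No gaps.
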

\begin{proof}
Since $P$ is $v_M$-exhaustive on $T$, there exists a $v_M$-exhaustion of $T$, denoted by 
$(E_{i})_{i\in I}$ , such that
every $E_{i}$ has $P.$ Thus
\begin{eqnarray}\label{7}
\forall \,\varepsilon >0, \exists \,\, n_{0}(\varepsilon) = n_{0} \in
\mathbb{N}\; \text{such that}\; v_M(T \backslash \bigcup\limits_{i=1}^{n_{0}} E_{i}) <\varepsilon.
\end{eqnarray}
Let $E_{0} = T \backslash \bigcup\limits_{i \in I} E_{i}$. From the previous inequality
it results $v_M(E_{0}) = 0.$
Let $(B_{i})_{i\in I}$ be the family of sets defined by: $B_{1} =
E_{0}\cup E_{1}\in \mathcal{A}, B_{i} = E_{i}\in \mathcal{A}$ for $i \geq 2$. 
Then $v_M(B_{1})= v_M(E_{1})>0$ since
$v_M(E_{1})\leq v_M(B_{1})\leq v_M(E_{1})+v_M(E_{0})= v_M(E_{1})$
 and
$v_M(B_{i}) = v_M(E_{i})>0$, for every $ i \geq 2.$
Obviously, $T = \cup_{i\in I} B_{i}$. 
It is 
$\cup_{i=1}^{n_{0}}B_{i} = E_{0}\cup (\cup_{i=1}^{n_{0}} E_{i})$.
Since $v_M(T \backslash \cup_{i=1}^{n_{0}}B_{i})\leq v_M(T\backslash \cup_{i=1}^{n_{0}}E_{i})<\varepsilon,$
then $(B_{i})_{i\in I}$ is a $v_M$-exhaustion of $T$. 
Now, for every $i \geq 2,\, B_{i} = E_{i}$ has $P$. So, 
it only remains  to prove that $B_{1}$ has $P.$ By the relations:
\[
B_{1}\triangle E_{1}= (E_{0}\cup E_{1})\triangle E_{1} =
E_{0}\backslash E_{1}\subset E_{0} \Rightarrow 0\leq \dd
v_M(B_{1}\triangle E_{1})\leq v_M(E_{0}) = 0,
\]
it follows that $v_M(B_{1}\triangle E_{1}) = 0$. Since
$P$ is $v_M$-null difference and $E_{1}$ has $P$, one concludes that $B_{1}$ has $P$.
\end{proof} \\

\begin{definition}\rm
A multimeasure $\G:\mathcal{A}\to 
ck(X)$ is said to be \textit{ dominated by} 
 $M$ if there is a constant $b>0$ such that $|\G(E)|_h \leq b v_M(E)$, for every $E \in \mathcal{A}$.
\end{definition}

\begin{lemma}{\rm (see \cite[Lemma 2.9]{ms-trieste} for semivariation)}\label{due}.
For every $E\in\mathcal{A}$ with $v_M(E)>0$, there exists
$B\in\mathcal{A} \cap E$,  
such that $v_M(B)<2|M(B)|_h$.
\end{lemma}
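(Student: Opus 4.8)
The plan is to argue by contradiction: suppose that for some $E \in \mathcal{A}$ with $v_M(E) > 0$, every measurable subset $B \subseteq E$ satisfies $v_M(B) \geq 2|M(B)|_h$. I would then show that this forces $v_M(E) = 0$, contradicting the hypothesis. The key observation is that the assumed inequality, applied to the pieces of a partition, lets one bound the variation of $E$ in terms of a fixed fraction of itself.

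More precisely, first I would note that $v_M$ is finitely additive on $\mathcal{A}$ — since $M$ is subadditive, $|M(\cdot)|_h$ is subadditive, so its variation $v_M$ is additive (this is the analogue of the statement in Remark \ref{rm1} that the variation of a subadditive set function is additive; see also Definition \ref{bv}). Hence for any finite partition $\{B_i\}_{i=1}^n$ of $E$ into measurable sets one has $\sum_i v_M(B_i) = v_M(E)$. Now fix $\varepsilon > 0$ and, by the definition of $v_M(E)$ as a supremum over partitions of $E$, choose a partition $\{B_i\}_{i=1}^n$ of $E$ with $\sum_{i=1}^n |M(B_i)|_h > v_M(E) - \varepsilon$. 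Applying the contradiction hypothesis to each $B_i \subseteq E$ gives $|M(B_i)|_h \leq \tfrac{1}{2} v_M(B_i)$, and summing,
\[
v_M(E) - \varepsilon < \sum_{i=1}^n |M(B_i)|_h \leq \frac{1}{2}\sum_{i=1}^n v_M(B_i) = \frac{1}{2} v_M(E).
\]
Since $\varepsilon > 0$ is arbitrary, this yields $v_M(E) \leq \tfrac12 v_M(E)$, whence $v_M(E) = 0$ (here finiteness of $v_M$, guaranteed by the standing assumption that $M$ has bounded variation, is used to subtract). This contradicts $v_M(E) > 0$, so some $B \in \mathcal{A} \cap E$ with $v_M(B) < 2|M(B)|_h$ must exist.

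The only delicate point is the additivity of $v_M$, which I would extract from subadditivity of $M$ exactly as in Remark \ref{rm1}: if $|M(\cdot)|_h$ is subadditive then its variation is additive. If one prefers not to invoke additivity, a small variant works directly with the supremum — given a near-optimal partition of $E$, the subsets $B_i$ of that partition already satisfy $\sum_i v_M(B_i) \le v_M(E)$ by monotonicity together with disjointness (superadditivity of the variation of any set function), and superadditivity is all that the inequality above actually requires. Either way the argument is short; the substance is simply the averaging/contradiction trick, and I expect no real obstacle beyond being careful that $v_M(E)$ is finite so that the subtraction $v_M(E) - \tfrac12 v_M(E)$ is legitimate.
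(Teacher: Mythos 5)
Your proof is correct and follows essentially the same route as the paper: argue by contradiction, pick a partition of $E$ nearly attaining the supremum defining $v_M(E)$, apply the assumed inequality $|M(B_i)|_h\leq\tfrac12 v_M(B_i)$ to each piece, and use additivity (or, as you note, merely superadditivity) of $v_M$ to conclude $v_M(E)\leq\tfrac12 v_M(E)$, a contradiction. Your justification of the additivity of $v_M$ via subadditivity of $|M(\cdot)|_h$, and the observation that superadditivity alone suffices, are both sound and if anything slightly more careful than the paper's version.
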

\begin{proof}
By contradiction, there exists an element $E\in \mathcal{A}$, with $v_M(E)>0$ such that
 $v_M(B)\geq 2|M(B)|_h$ for all measurable sets $B\subset E$, with $v_M(B)>0$. 
Fix $\varepsilon >0$ arbitrarily and pick a disjoint family $B_1,...,B_k$ of measurable subsets of $E$,
 such that $v_M(B_i)>0$ for all $i$ and
$\sum_i|M(B_i)|_h+\varepsilon \geq v_M(E).$
Since $v_M$ is additive, from the contradiction assumed we obtain
$v_M(E)=\sum_iv_M(B_i)\geq 2\sum_i |M(B_i)|_h \geq 2v_M(E)-2\varepsilon.$
Since $\varepsilon$ is arbitrarily small it follows that $v_M(E)\leq 0$, giving a contradiction.  
\end{proof} \\

We state now our main theorem,
which is inspired by  \cite[Lemma 3.1]{ms-trieste}. We point out, however, that our result is
 new even in the single-valued case, since additivity is requested just from one of the measures involved.
\begin{theorem}{\rm [Radon Nikod\'{y}m]}\label{trn}
Let $M$ and $\Gamma$ be  $ck(X)$-valued fuzzy multisubmeasures,
satisfying the  condition  $H_0$.
 Suppose moreover that  $\Gamma$ is additive and
\begin{enumerate}[\rm \bf \ref{trn}.1)]
\item  $\Gamma$ is 
dominated by $M;$
\item for every $\varepsilon>0,$ the set property $"A_{\Gamma} ( E, \varepsilon)\neq
\emptyset$" is $v_M$-exhaustive on every $E \in \mathcal{A}$.
\end{enumerate}
Then there exists an $M$-integrable bounded function 
$f: T\to \mathbb{R}_0^+$ such that for every $E \in\mathcal{A}$ it is
$\Gamma(E) = \int_{E} f dM.$
\end{theorem}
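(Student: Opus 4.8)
The plan is to follow Maynard's exhaustion method, as adapted to the multisubmeasure setting. The first step is to use hypothesis \ref{trn}.2) together with the null-difference property (which holds thanks to \cite[Lemma 3.3]{ms94b}, since $\Gamma \ll v_M$ follows from strong absolute continuity) and the theorem just proved, to build, for each $n \in \mathbb{N}$, a countable $v_M$-exhaustion of $T$ by pairwise disjoint sets each having the property $A_\Gamma(\cdot, 1/n) \neq \emptyset$. Intersecting/refining these exhaustions across $n$ (a standard diagonal-type construction on the $\sigma$-algebra $\As$) produces a single countable measurable partition $(T_k)_k$ of $T$, together with, on each piece $T_k$, a nested decreasing sequence of values chosen from the sets $A_\Gamma(T_k, 1/n)$; since $A_\Gamma(T_k,\alpha)$ is a bounded subset of $[0,\infty)$ (boundedness comes from Lemma \ref{due}: on a set $B \subset T_k$ with $v_M(B)<2|M(B)|_h$ one has $h(\Gamma(B), rM(B)) \leq (1/n)v_M(B)$, forcing $r$ into a bounded interval controlled by the strong-continuity constant $b$), one extracts on each $T_k$ a limit value $c_k \geq 0$, and the $c_k$ are uniformly bounded. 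The function $f := \sum_k c_k \mathbf{1}_{T_k}$ is then the candidate Radon--Nikod\'ym derivative, and it is bounded.

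The second step is to verify integrability of $f$ and the identity $\Gamma(E) = \int_E f\,dM$. First I would record that $f$ is $v_M$-totally measurable: for $\varepsilon > 0$, using $\lim_n \overline{m}(\cup_{j>n}T_j)=0$ (this is where Proposition \ref{funzionidiscrete}, translated through the embedding $U$ of Theorem \ref{LABU}, enters — the tail variation of the exhaustion goes to zero, which is exactly the hypothesis of that proposition) together with boundedness of $(c_k)$, one takes $A_0 := \cup_{j > N} T_j$ with $\overline{m}(A_0)$ small and the remaining $T_1,\dots,T_N$ as the oscillation-free pieces. Hence $f$ is Gould $M$-integrable (equivalently, via Theorem \ref{singlegould}, $U_M$-integrable), and by Proposition \ref{funzionidiscrete} applied to $U_M$ one has $\int_T f\,dU_M = \sum_k c_k U_M(T_k)$, i.e. $\int_T f\,dM = cl(\sum_k c_k M(T_k))$ in $ck(X)$; the same holds with $T$ replaced by any $E \in \As$, working with the traces $E \cap T_k$. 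Then, for fixed $E$ and each $k$, the defining inequality of $c_k \in A_\Gamma(T_k, 1/n)$ gives $h(\Gamma(E \cap T_k), c_k M(E\cap T_k)) \leq (1/n) v_M(E \cap T_k)$; summing over $k$ (finitely many up to a tail controlled by $v_M(\cup_{j>N} T_j) \to 0$, using additivity of $v_M$ and additivity of $\Gamma$) and letting $n \to \infty$ yields $h\big(\Gamma(E), cl(\sum_k c_k M(E\cap T_k))\big) = 0$, which is the claimed equality $\Gamma(E) = \int_E f\,dM$.

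The main obstacle I expect is the amalgamation step: producing a \emph{single} partition $(T_k)_k$ that simultaneously witnesses $A_\Gamma(\cdot, 1/n) \neq \emptyset$ for \emph{all} $n$, and for which the chosen approximate-range values converge consistently. Each $n$ gives its own exhaustion, and refining a countable family of countable exhaustions while keeping the tail-variation condition $\lim_k \overline{m}(\cup_{j>k} T_j) = 0$ requires care — one must interleave the refinement with a rapidly-decreasing choice of the truncation indices $n_0(1/n)$, and check that on each final piece $T_k$ the sets $A_\Gamma(T_k, 1/n)$ are still nonempty (monotone in $n$, i.e. $A_\Gamma(T_k,1/n) \supseteq A_\Gamma(T_k, 1/(n+1))$ and $A_\Gamma$ is monotone under passing to subsets $T_k \subset$ the original exhaustion member), so that a common value can be selected in the limit. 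This is precisely the technical heart of Maynard-type arguments, and the rest (total measurability, the limit passage, the sum identity) is comparatively routine given the integrability machinery of Section \ref{sg} and the embedding theorem.
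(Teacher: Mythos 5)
Your overall architecture (embedding via Theorem \ref{LABU}, reduction to $M_0$ via Theorem \ref{replace} and Proposition \ref{variazioneintegrale}, boundedness of the approximate-range values via Lemma \ref{due} together with strong absolute continuity, integration of countably-valued functions via Proposition \ref{funzionidiscrete}, and the final $\varepsilon$-splitting between the tail of an exhaustion and the defining inequality of $A_{\Gamma}$) coincides with the paper's. But the core construction --- the ``amalgamation step'' you yourself flag as the main obstacle --- does not work, and the failure is not merely technical. First, the common refinement of countably many countable partitions is in general uncountable, so ``intersecting the exhaustions across $n$'' does not yield a countable partition $(T_k)_k$. Second, and decisively: if a set $T_k$ with $v_M(T_k)>0$ satisfied $A_{\Gamma}(T_k,1/n)\neq\emptyset$ for \emph{every} $n$, then, since each $A_{\Gamma}(T_k,\alpha)$ is a closed, bounded, nested-decreasing subset of $[0,+\infty)$ (exactly as you observe), there would exist $c_k\in\bigcap_n A_{\Gamma}(T_k,1/n)$, and the inequality $h(\Gamma(H),c_k M(H))\leq (1/n)\,v_M(H)$ for all measurable $H\subseteq T_k$ and all $n$ would force $\Gamma(H)=c_k M(H)$ identically on $T_k$. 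Your construction would thus prove that $\Gamma$ is an \emph{exact} scalar multiple of $M$ on each piece of a countable partition --- a conclusion strictly stronger than the theorem and false already for $X=\mathbb{R}$, $M$ the Lebesgue measure on $[0,1]$ and $\Gamma(E)=\int_E g(t)\,dt$ with $g$ continuous and strictly increasing (all hypotheses hold, yet $g$ is constant on no set of positive measure). The Radon--Nikod\'{y}m derivative is in general not countably valued, so no single partition can carry it.

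The paper avoids this by never amalgamating: it builds a refining \emph{tree} of exhaustions $(E^n_{\alpha})_{\alpha\in\mathbb{N}^n}$, one level per accuracy $2^{-n}$, defines at each level a countably-valued function $f_n=\sum_{\alpha}r^n_{\alpha}\chi_{E^n_{\alpha}}$ with $r^n_{\alpha}\in A_{\gamma}(E^n_{\alpha},2^{-n})$, and proves that $(f_n)_n$ is \emph{uniformly Cauchy} (using Lemma \ref{due} once more: two admissible values attached to nested pieces differ by at most $2^{2-k}$). The derivative is the uniform limit $f=\lim_n f_n$, bounded and $\overline{m}$-totally measurable but constant on no fixed partition; the identity $\Gamma(E)=\int_E f\,dM$ is then obtained by comparing $\int_E f\,dm$ with $\int_E f_n\,dm$ and invoking the level-$n$ approximate-range inequality plus absolute continuity on the tail. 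If you replace the single-partition amalgamation by this sequence-of-levels scheme and supply the uniform Cauchy estimate, the remainder of your outline goes through essentially as you wrote it.
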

\begin{proof}
Thanks to Theorems \ref{singlegould} and  \ref{replace}, it will be sufficient to prove that, for the 
single-valued mappings $U_{\Gamma}$ and $U_{M_0}$, there exists a bounded measurable
 Radon-Nikod\'{y}m derivative $f$. Indeed,
since $\Gamma$ is dominated by  $M$,
the same property  holds with respect to $M_0$, since $v_M=v_{M_0}$ by Proposition \ref{variazioneintegrale}.
Of course, then the single-valued additive mapping $U_{\Gamma}$ is 
dominated by  $U_{M_0}$. \\
Now, it will be proved that $\Gamma$ and $M_0$ satisfy the condition of exhaustivity of the property 
$``A_{\Gamma}(E,\varepsilon)\neq \emptyset".$ 
To this aim, the notation $A_{\Gamma,M}(E,\varepsilon)$ will be used, in order to stress the role of 
$M$ in $A_{\Gamma}(E,\varepsilon)$.  Then, assuming that the set $A_{\Gamma,M}(E,\varepsilon)$
 is nonempty, it also turns out that 
$A_{\Gamma,M_0}(E,\varepsilon)$ is nonempty. Indeed, let $\sigma>0$ and $r\in A_{\Gamma,M}(E,\varepsilon)$
 be fixed. Then, for every measurable $H\subset E$,  there exists a finite partition $\{H_1,...,H_l\}$ of $H$,
such that 
$h(rM_0(H),r\sum_{i=1}^lM(H_i))\leq \sigma.$ 
Now, one has
$h(\Gamma(H),r M_0(H)) \leq h(\Gamma(H),r\sum_{i=1}^lM(H_i))+\sigma=
 h(\sum_{i=1}^l\Gamma(H_i), r\sum_{i=1}^lM(H_i))+\sigma\leq
 \sum_{i=1}^l h(\Gamma(H_i),rM(H_i))+\sigma\leq  \varepsilon \sum_{i=1}^lv_M(H_i)+\sigma=
\varepsilon v_M(H)+\sigma=\varepsilon v_{M_0}(H)+\sigma.
$
Since $\sigma$ is arbitrary, this shows that $r\in A_{\Gamma,M_0}(E,\varepsilon)$, and, in turn, this implies the exhaustivity 
of the property 
$``A_{\Gamma}(E,\varepsilon)\neq \emptyset".$ also with respect to $M_0$.\\
Of course, since $U$ is an isometric embedding,  the measures $U(\Gamma)$ and $U(M_0)$ also enjoy the 
same properties of absolute continuity and exhaustivity. 
Once a bounded measurable Radon-Nikod\'{y}m derivative $f$ has been found, for $U_{\Gamma}$ with respect to $U_{M_0}$, 
then one has integrability of $f$ with respect to $M_0$ by the Theorem \ref{singlegould} and
$U\left(\int_E f d M_0 \right)=\int_E f\, dU_{M_0}=U_{\Gamma}(E),$
while
$\int_E f dM_0=\int_E f dM, $
holds true for every measurable set $E$ thanks to Theorem \ref{replace}.\\
 So, for all $E\in \As$ one has
$U\left(\int_E f dM  \right)=U_{\Gamma}(E)$
and therefore $\int_E f dM=\Gamma(E).$
So, the problem is to find a bounded measurable mapping $f$, derivative of $U_{\Gamma}$ with respect to $U_M$.
In order to prove this we can observe that 
 $U_M$ satisfies the assumption of \cite[Lemma 3.1]{ms-trieste}  and so an integrable function $f$ in the 
sense of \cite{ms-trieste} can be found. Finally \cite[Theorem 2.16]{gp} guarantees the integrability in
 the present sense.
\end{proof} \\

Observe that Theorem \ref{trn} extends theorems given in \cite{ms-trieste,ms94b}, moreover
as an application 
 we can consider a  Gould integrable multifunction  (in the sense of \cite[Definition 16]{pap2016}) $\varphi: T \to ck(X)$
with respect to a probability $\mu$; let $M$ be its Gould integral:  $M: \mathcal{A} \to ck(X)$.
Thanks to \cite[Theorem 11]{pap2016} the set valued function $M$ is additive, and satisfies the condition $H_0$.
Observe moreover that, thanks to the R{\aa}dstr\"{o}m's embedding and \cite[Definition 16]{pap2016},  an analogous version of 
Corollary \ref{henstoc2}
 holds when the measure involved is scalar and the integrand is $C(\Omega$)-valued. 
Then we have
\begin{corollary}\label{cor-finale}
Let $\Gamma$ be any $ck(X)$-valued fuzzy multimeasure, satisfying $H_0$, 
 {\rm \ref{trn}.1)} and {\rm \ref{trn}.2)}. Then there exists a scalar
$M$-integrable bounded mapping $f:T\to \erre^+$ such that, for every $A\in \mathcal{A}$,
\begin{eqnarray}\label{eq:sostituzione}
\Gamma(A)=\int_A f(t)\varphi(t)d\mu.
\end{eqnarray}
\end{corollary}
\begin{proof}
Thanks to Theorem \ref{trn}, there exists a bounded  $M$-integrable mapping $f:T\to \erre^+$, such that, for all $A\in \mathcal{A}$,
$\int_A f(t) dM=\Gamma(A).$
By \cite[Theorem 5]{pap2016} it is enough to prove the assertion for $A = T$. Without loss of generality we also can consider
 $\varphi, M, \Gamma$ as objects with values in  $C(\Omega)$, as pointed out in Remark \ref{M-UM} and in Theorem 
\ref{LABU}. In this new setting, we can use  Corollary \ref{henstoc2}
(in both versions: scalar functions and $C(\Omega$)-valued measures and viceversa)
 and so,
 there exists a sequence of  partition $(P_n)_n$ in $T$ such that, for every $n \in \mathbb{N}$ and 
for every partition $P$  finer than $P_n$, one has:
$ \sum_{J \in P} | M(J) - \varphi (t_J) \mu(J) | \leq \dfrac{u}{n}, 
\,  \sum_{J \in P} | \Gamma(J) - f (t_J) M(J) | \leq \dfrac{u}{n} 
$
for every choice of $t_J \in J$. Then, for every $P$ finer than $P_n$,
\begin{eqnarray*}
&& | \sum_{J \in P}  f(t_j) \varphi (t_J) \mu(J) - \Gamma(T)| \leq \sum_{J \in P} |  f(t_j) \varphi (t_J) \mu(J) - \Gamma(J)| \leq\\
&&
\leq \sum_{J \in P} |  f(t_j) \varphi (t_J) \mu(J) - f(t_j)M(J)| +  \sum_{J \in P} | f(t_J) M(J) - \Gamma(J)| \leq \\ 
&&
\leq \sum_{J \in P} |  f(t_j)| \cdot | \varphi (t_J) \mu(J) - M(J)| +  \dfrac{u}{n} \leq  \dfrac{u}{n} (1 + \sup |f|).
\end{eqnarray*}
So $(\sigma(f \varphi, P))_P$ is convergent in $C(\Omega)$ to $\Gamma(T)$, and so $f\varphi$ is $\mu$-integrable and
$\Gamma(T)=\int_T f(t)\varphi(t)d\mu.$
\end{proof} \\
The last result of Corollary \ref{cor-finale} can  be viewed as
 an integration by substitution for fuzzy multisubmeasures and 
(\ref{eq:sostituzione}) can be written as
$$ \displaystyle{\int_A} f dM = \displaystyle{\int_A} f \varphi d\mu, \quad \forall \, A \in \As.$$

\section*{Conclusions}
We have studied the Gould-integrability of a scalar function $f$ with respect to a set-valued, non necessarily additive measure $m$.
 In particular we have focused our attention on compact and convex-valued measures. In this case, thanks to the well-known
  R{\aa}dstr\"{o}m's embedding theorem, $m$ can be considered as a measure taking values in the Banach lattice $C(\Omega)$. 
In addition, the  notion of {\em integrability} has been  introduced for $m$, with the purpose  to avoid the requirement of additivity.  
In fact, thanks to the R{\aa}dstr\"{o}m's embedding, we are able to establish a Henstock-type theorem for this kind of integral.
This, in turn, implies that any {\em integrable} measure $m$ can be seen as an additive measure, plus a {\em negligible} one. 
Finally a Radon-Nikod\'{y}m Theorem is obtained in this situation which is new also in the finite dimensional case, since one of the involved set-valued
 measures is non additive.
Moreover,  a  u-substitution result for fuzzy multimeasures is established.


\small

\footnotesize
\Addresses
\end{document}